\def\R{\mathbb R}
\def\N{\mathbb N}
\def\al{\alpha}
\def\be{\beta}
\def\ga{\gamma}
\def\de{\delta}
\def\ep{\epsilon}
\def\ta{\theta}
\def\om{\omega}
\def\na{\nabla}
\def\Ga{\Gamma}  % Euler functions
\def\Om{\Omega}  % domains
\def\De{\Delta}      % Laplacian operators
\def\wq{\infty}
\def\pa{\partial}
\newcommand{\D}{{\rm d}}%  integral sign d
\newcommand{\medint}{-\kern -,375cm\int}         %  average integral
\newcommand{\medintinrigo}{-\kern -,315cm\int}
\numberwithin{equation}{section}
\newtheorem{theorem}{Theorem}[section]
\newtheorem*{theorem*}{Theorem}  %Theorems without number
\newtheorem*{conclusion*}{Conclusin}
\newtheorem*{corollary*}{Corollary}
\newtheorem{lemma}[theorem]{Lemma}
\newtheorem*{lemma*}{Lemma}
\newtheorem*{notation*}{Notation}
\newtheorem{problem}[theorem]{Problem}
\newtheorem{proposition}[theorem]{Proposition}
\newtheorem*{proposition*}{Proposition}
\newtheorem{remark}[theorem]{Remark}
\newtheorem*{remark*}{Remark}
\newtheorem*{example*}{Example}                %%%%% example can not use, due to conflict with Lyx.
\theoremstyle{definition}
\def\XXint#1#2#3{{\setbox0=\hbox{$#1{#2#3}{\int}$}
		\vcenter{\hbox{$#2#3$}}\kern-.5\wd0}}
\begin{document}
\title[Regularity of elliptic systems]{Regularity of weak solutions to higher order elliptic systems in critical dimensions}

\author[C.-Y. Guo and C.-L. Xiang]{Chang-Yu Guo and Chang-Lin Xiang$^\ast$}

\address[Chang-Yu Guo]{Institute of Mathematics, \'Ecole Polytechnique F\'ed\'erale de Lausanne (EPFL), Station 8,  CH-1015 Lausanne, Switzerland and Research Center for Mathematics and Interdisciplinary Sciences, Shandong University,  Qingdao, P. R. China}
\email{changyu.guo@sdu.edu.cn}

\address[Chang-Lin Xiang]{School of Information and Mathematics, Yangtze University, Jingzhou 434023, P.R. China}
\email{changlin.xiang@yangtzeu.edu.cn}
\thanks{$^\ast$ Corresponding author}
\thanks{C.-Y. Guo was supported by Swiss National Science Foundation Grant 175985 and the Qilu funding of Shandong University (No. 62550089963197). The corresponding author C.-L. Xiang is financially supported by the National Natural Science Foundation of China (No. 11701045) and  the Yangtze Youth Fund (No. 2016cqn56).}

\begin{abstract}
In this paper, we develop an elementary and unified treatment, in the spirit of Rivi\`ere and Struwe (Comm. Pure. Appl. Math. 2008), to explore regularity of weak solutions of higher order geometric elliptic systems in critical dimensions  without using conservation law. As a result,  we obtain an interior H\"older continuity for solutions of the higher order elliptic system of de Longueville and Gastel \cite{deLongueville-Gastel-2019} in critical dimensions
%fourth order linear system proposed by Struwe ("Partial regularity for biharmonic maps, revisited". Calc. Var. Partial Differential Equations \textbf{33} (2008), 249-262)
%\[\De^{2}u=\De(D\cdot\na u)+{\rm div}(E\cdot \na u)+F\cdot\na u \qquad \text{in } B^4 \]
$$\Delta^{k}u=\sum_{i=0}^{k-1}\Delta^{i}\left\langle V_{i},du\right\rangle +\sum_{i=0}^{k-2}\Delta^{i}\delta\left(w_{i}du\right) \quad \text{in } B^{2k},$$
under critical regularity assumptions on the coefficient functions. This verifies an expectation of  Rivi\`ere   \cite[Page 108]{Riviere-2011}, and   provides an affirmative answer to an open question of Struwe \cite{Struwe-2008} in dimension four when $k=2$. The H\"older continuity is also an improvement of the continuity result of Lamm and  Rivi\`ere \cite{Lamm-Riviere-2008} and de Longueville and Gastel \cite{deLongueville-Gastel-2019}.
%It also applies to higher order  linear elliptic systems that include $k$-polyharmonic mappings  ($k\ge 3$) in the critical dimension $\R^{2k}$ and hence provides a partial answer to an open question of Lamm and Rivi\'ere ("Conservation
%laws for fourth order systems in four dimensions". Comm. Partial Differential
%Equations \textbf{33} (2008), 245-262.)
%Without using  conservation law, we prove interior H\"older continuity for solutions of the fourth order linear system proposed by Struwe ("Partial regularity for biharmonic maps, revisited". Calc. Var. Partial Differential Equations \textbf{33} (2008), 249-262)
%\[\De^{2}u=\De(D\cdot\na u)+{\rm div}(E\cdot \na u)+F\cdot\na u \qquad \text{in } B^4 \]
%under critical regularity assumptions on the coefficient functions.  This provides an affirmative answer to an open question of Struwe \cite{Struwe-2008} in four dimension. We also show how to extends our  result to higher order  linear elliptic systems that include $k$-polyharmonic mappings  ($k\ge 3$) in $\R^{2k}$.
\end{abstract}

\maketitle

{\small
\keywords {\noindent {\bf Keywords:} Higher order  elliptic systems, H\"older regularity, Lorentz-Sobolev spaces, Riesz potential theory, Gauge transform}
\smallskip
\newline
\subjclass{\noindent {\bf 2010 Mathematics Subject Classification:} 35J48, 35G50, 35B65}
\tableofcontents}
\bigskip

\section{Introduction}\label{sec:introduction}
%\subsection{Background and motivation}
In the calculus of variations, functionals
$$u\mapsto \int F(x,u(x), Du(x))\D x$$
of quadratic growth are one of the most attractive topics, since many interesting geometric problems are concerned. By the fundamental work of Morrey \cite{Morrey-1948}, minimizers of such functionals in $W^{1,2}(B^2, \R^m)$ are locally H\"older continuous. However, when the domain has higher dimensions (greater than or equal to three), there exist discontinuous minimizers, much less to say about general critical points. Due to the geometric origin of many variational problems, a reasonable conjecture formulated by Hildbrandt \cite{Hildbrandt-1980} in the late 1970s is that \emph{ critical points of coercive conformally invariant Lagrangian with quadratic growth are regular.}  In his  pioneer work \cite{Helein-2002}, H\'elein   confirmed  this conjecture  in the case of weakly  harmonic mappings: every weakly harmonic mappings from the two dimensional disk $B^2\subset \R^2$ into any closed manifold are smooth via the nowadays well-known moving frame method.

In the recent  remarkable work \cite{Riviere-2007}, Rivi\`ere completely solved  this conjecture. In fact, his work has far more applications beyond conformally invariant problems; see \cite{Riviere-2011,Riviere-2012} for a comprehensive overview.  In  \cite{Riviere-2007},  he proposed the general second order linear elliptic  system \begin{equation}\label{eq:Riviere 2007}
	-\Delta u=\Omega\cdot \nabla u \qquad \text{in }B^2
\end{equation}
where $u\in W^{1,2}(B^2, \R^m)$ and $\Omega=(\Omega_{ij})\in L^2(B^2,so_m\otimes \Lambda^1\R^2)$. As was verified in \cite{Riviere-2007}, \eqref{eq:Riviere 2007} includes
the Euler-Lagrange equations of  critical points of all second order conformally invariant variational functionals which  act on mappings $u\in W^{1,2}(B^2,N)$ from  $B^2\subset \R^2$ into a closed Riemannian manifold $N\subset \R^m$. In particular, \eqref{eq:Riviere 2007} includes the equations of weakly harmonic mappings from  $B^2$ into $N$ and prescribed mean curvature equations.
 Note that the regularity assumption of $\Om$ makes the system critical in the sense that $\Om\cdot \na u\in L^1(B^2)$ which prevents  a direct application of the standard elliptic $L^p$-regularity theory.  To deduce continuity of weak solutions of \eqref{eq:Riviere 2007},  Rivi\`ere \cite{Riviere-2007} proved that each solution of  \eqref{eq:Riviere 2007} satisfies  certain conservation law,
 %  the following \emph{conservation law}:
%\begin{equation}\label{eq:conservation law Riviere 2007} \text{div}\big(A\nabla u+B\nabla^\perp u \big)=0 \end{equation} for some $A\in L^\infty\cap W^{1,2}(B^2,M_m(\R))$ and $B\in W^{1,2}(B^2,M_m(\R)\otimes \Lambda^2\R^n)$. % satisfying  \begin{equation}\label{eq:Find A and B} 	\nabla A-A\Omega=\nabla^\perp B, \end{equation} where $\nabla^\perp=(-\pa_y, \pa_x)$.
which permits  to prove the continuity of weak solutions of \eqref{eq:Riviere 2007}, among many other results. As an application,  he recovered the regularity result of  H\'elein  \cite{Helein-2002}. %For an overview of the geometric variational problem in two dimensions and this new approach towards the regularity results achieved, we refer the interested readers to \cite{Riviere-2007,Riviere-2012}.

%The conservation law \eqref{eq:conservation law Riviere 2007} implies continuity of weakly harmonic mappings in two dimensions.
 However, the conservation law does not hold in general in high dimensions (with respect to the domain side).  Consider the  same system in higher dimensions
 \begin{equation}\label{eq:Riviere 2007-2}
	-\Delta u=\Omega\cdot \nabla u \qquad \text{in }B^n
\end{equation}
where $n\ge 3$,  $u\in W^{1,2}(B^n,N)$ and $\Omega=(\Omega_{ij})\in L^2(B^n,so_m\otimes \Lambda^1\R^n)$.  Rivi\`ere and Struwe \cite{Riviere-Struve-2008} found that the conservation law of the above mentioned does not hold anymore. % which includes for instance the equation of stationary harmonic mappings from $B^n$ into $N$.
 To overcome this difficulty, they developed a new approach for the regularity based on the  work  \cite{Riviere-2007}. More precisely, they first adapted  a Gauge transform initially introduced by Uhlenbeck in her famous work \cite{Uhlenbeck-1982} (and further developed  in \cite{Riviere-2007}) to rewrite \eqref{eq:Riviere 2007-2} into an equivalent form, and then  derived  a partial regularity of weak solutions of \eqref{eq:Riviere 2007-2} from the Gauge-equivalent system directly,  avoiding the use of conservation law. This alternative approach also allowed them to reproduce the corresponding regularity  result of  Rivi\`ere \cite{Riviere-2007}.
   % under the Morrey growth assumption
%\begin{equation}\label{eq:Morrey growth assumption in Riviere-Struwe} 	\|\na^{2}u\|_{M^{2,n-4}(B_{10})}+\|\na u\|_{M^{2,n-2}(B_{10})}<\ep(m), \end{equation} where $\epsilon(m)$ is a positive constant depending only on $m$.
As a byproduct of  \cite{Riviere-Struve-2008}, they reproduced partial H\"older continuity of stationary harmonic mappings from $B^n$ into general closed Riemannian manifolds; which was  proved earlier in \cite{Evans-1991,Bethuel-1993}.
 %under suitable Morrey growth condition.

Taking into account of conformal invariance in $\R^4$, it is natural to consider fourth order elliptic systems similar as \eqref{eq:Riviere 2007}.%, replacing the left-hand side of \eqref{eq:Riviere 2007} by the bi-Laplacian.
 In their interesting work \cite{Lamm-Riviere-2008}, Lamm and Rivi\`ere considered the following fourth order elliptic system
\begin{equation}\label{eq:Lamm-Riviere 2008}
\De^{2}u=\De(V\cdot\na u)+{\rm div}(w\na u)+W\cdot\na u \quad  \text{in }B^4,
\end{equation}
where $V\in W^{1,2}(B^4,M_m\otimes \Lambda^1\R^{4})$, $w\in L^{2}(B^4,M_m)$,
and $W\in W^{-1,2}(B^4,M_m\otimes \Lambda^1\R^{4})$ is of the form
\[
W=\na\om+F,
\]
with $\om\in L^{2}(B^4,so_m)$ and $F\in L^{\frac{4}{3},1}(B^4,M_m\otimes \Lambda^1\R^{4})$. System \eqref{eq:Lamm-Riviere 2008} models many interesting fourth order conformally invariant geometric objects  in four dimensions such as extrinsic
and intrinsic biharmonic mappings from $B^4$ into closed Riemannian manifolds. As being critical points  of conformally invariant energy functionals in dimension  four, biharmonic mappings have attracted extensive attentions in recent years. Chang, Wang and Yang \cite{Chang-W-Y-1999} introduced and studied the regularity theory for extrinsic biharmonic mappings from $B^n$ into
Euclidean spheres and shortly after that, Wang developed the regularity theory of biharmonic mappings into general closed Riemannian manifolds in a series of pioneer works \cite{Wang-2004-CV,Wang-2004-MZ,Wang-2004-CPAM}; see also  \cite{Ku-2008,Strzelecki-2003} for more related works on biharmonic mappings.
%Later on, Lamm \cite{Lamm-2005} studied biharmonic heat flow into manifolds with nonpositive curvature;

The approach of Lamm and Rivi\`ere \cite{Lamm-Riviere-2008} follows closely the line of Rivi\`ere \cite{Riviere-2007}: they first established a conservation law for \eqref{eq:Lamm-Riviere 2008} similar to that of \eqref{eq:Riviere 2007},  and then they applied standard  potential theory to derive continuity of weak solutions to \eqref{eq:Lamm-Riviere 2008}. As an immediate consequence, they reproduced the continuity of weakly  biharmonic mappings from $B^4$ into a closed Riemannian manifold, which was initially proved by Wang \cite{Wang-2004-MZ} based on the moving frame method of  H\'elein \cite{Helein-2002}.

In view of the work of Rivi\`ere and Struwe \cite{Riviere-Struve-2008}, it is natural to approach the regularity of solutions  for   \eqref{eq:Lamm-Riviere 2008} without the use of  conservation law. An initial attempt towards this was made by  Struwe
 \cite{Struwe-2008}:
he first wrote the  system of bi-harmonic mappings in the form
\begin{equation}\label{eq: Struwe's equation}
  \De^{2}u=\De(D\cdot\na u)+{\rm div}(E\cdot \na u)+F\cdot\na u \qquad \text{in }B^n,
\end{equation}
where $n\ge 4$, $
F=\De\Om+G
$, $D, E, G$ belong to some function spaces and $\Om$ is an $so_m$-valued function with entries in $\Lambda^1\R^n$.
 Then he developed a  Gauge transform %in suitable Morrey spaces (see Section \ref{subsec:Gauge transform} below for precise meaning)
 to reformulate \eqref{eq: Struwe's equation} into a gauge-equivalent form. To deduce regularity of solutions to  \eqref{eq: Struwe's equation},  Struwe \cite{Struwe-2008} regarded the gauge-equivalent system together with the system for gauge transform as a coupled system (for both  $u$ and the Gauge $P$), from which he obtained  some decay estimates  for both $u$ and $P$. Then (local) partial  H\"older continuity of $u$ follows from  Morrey's Dirichlet growth theorem. However, in this approach, Struwe had to assume in addition that $D,E,G, \Om$ satisfy certain growth conditions in terms of  $u$ and its derivatives. Thus, this approach does not directly apply for the general elliptic system \eqref{eq: Struwe's equation}.  In \cite[Page 250]{Struwe-2008}, he proposed:

\emph{"It would be interesting to see if our method can be extended to general linear systems of fourth order that exhibit a structure similar to the one of equation \eqref{eq: Struwe's equation}, as is the case for second
order systems \eqref{eq:Riviere 2007-2} or in the conformal case $n=4$ considered in \cite{Lamm-Riviere-2008}."}

In other words, the above problem can be reformulated as follows:
\begin{problem}\label{prob:Struwe} 	Is it possible to extend the method of \cite{Riviere-Struve-2008} to the general fourth order linear system \eqref{eq: Struwe's equation} or to the ``conformal" case $n=4$ for the system \eqref{eq:Lamm-Riviere 2008}? \end{problem}

Conformal invariant problems also appear naturally in all even dimensions, which lead to higher order elliptic systems. One particular interest in this respect comes from the following  open problem proposed in Lamm and Rivi\`ere \cite[Remark 1.4]{Lamm-Riviere-2008}:

\medskip
\emph{"We expect similar theorems to remain true for general even order
elliptic systems of the type  \eqref{eq:Lamm-Riviere 2008}."}
\medskip

\noindent and the following expectation from a conference proceeding of Rivi\`ere \cite[Page 108]{Riviere-2011}:

\medskip
\emph{"It is natural to believe that a general result exists for $m$-th order linear
systems in $m$ dimension whose 1st order potential is antisymmetric."}
\medskip

In the very interesting recent work of de Longueville and Gastel \cite{deLongueville-Gastel-2019},  the following $2k$-th ($k\ge 3$) order linear elliptic system with  antisymmetric first order potential is considered:
\begin{equation}\label{eq: Longue-Gastel system}
\Delta^{k}u=\sum_{i=1}^{k-1}\Delta^{i}\left\langle V_{i},du\right\rangle +\sum_{i=0}^{k-2}\Delta^{i}\delta\left(w_{i}du\right)+\langle V_0, du\rangle,
\end{equation}
where the authors successfully extended the conservation law of Rivi\`ere \cite{Riviere-2007} and Lamm and  Rivi\`ere \cite{Lamm-Riviere-2008} to \eqref{eq: Longue-Gastel system}. As an application of their conservation law, they derived continuity of weak solutions; see also \cite{Horter-Lamm-2020} for an alternative approach.

We would like to point out that the system \eqref{eq: Longue-Gastel system} includes both extrinsically and intrinsically polyharmonic mappings. Recall that for $k\geq 2$ and $u\in W^{k,2}(B^n,N)$, the extrinsic \emph{$k$-harmonic energy functional} is defined as
\begin{equation*}\label{eq:k harmonic energy}
E(u):=\int_{B^n}|\nabla^k u|^2 dx
\end{equation*}
and weakly extrinsic $k$-polyharmonic mappings are critical points of  the $k$-harmonic energy with respect to compactly supported variations on $N$ (see e.g. \cite{Angelsberg-Pumberger-AGAG2009}). Similar to the fourth order elliptic system \eqref{eq:Lamm-Riviere 2008}, higher order elliptic geometric variational problems have attracted great attention in the recent literature; see e.g. \cite{Angelsberg-Pumberger-AGAG2009,deLongueville-Gastel-2019,Gastel-Scheven-2009CAG,
Goldstein-Strzelecki-Zatorska-2009,Horter-Lamm-2020,Lamm-Wang-2009} and the references therein for more along this direction. From an analytic point of view, one particular interest for such higher order elliptic systems is that they have critical growth nonlinearities and classical regularity method just fails to apply in such a borderline case. We would like to emphasize that the regularity theorems for $k$-polyharmonic mappings (into a closed manifold) were obtained by Angelsberg and Pumberger  \cite{Angelsberg-Pumberger-AGAG2009} under certain strong a priori regularity assumptions  for the solution and by  Gastel and Scheven  \cite{Gastel-Scheven-2009CAG} under natural regularity assumption via the construction of Coulomb frames (similar to the approach of  \cite{Wang-2004-CPAM}).

%We now turn to  consider  higher order elliptic systems that form a natural extension of  \eqref{eq:Lamm-Riviere 2008} and include in particular the \emph{weakly $k$-polyharmonic mappings} (into a closed manifold $N\subset \R^m$), in critical dimensions.

%Our particular interest on the higher order elliptic systems comes from the following interesting open problem of Lamm and Rivi\`ere \cite[Remark 1.4]{Lamm-Riviere-2008}:
%
%
%\medskip
%\emph{"We expect similar theorems to remain true for general even order
%elliptic systems of the type  \eqref{eq:Lamm-Riviere 2008}."}
%\medskip
%
%\noindent and the following expectation from a conference proceeding of Rivi\`ere \cite[Page 108]{Riviere-2011}:
%
%
%
%\medskip
%\emph{"It is natural to believe that a general result exists for $m$-th order linear
%systems in $m$ dimension whose 1st order potential is antisymmetric."}
%\medskip

In view of the work of  Rivi\`ere and Struwe \cite{Riviere-Struve-2008},  it is natural to generalize Problem \ref{prob:Struwe} as follows: 	

%\begin{problem}\label{prob:uniform} 	 Extend the   regularity theory of  Rivi\`ere \cite{Riviere-2007} and Lamm and   Rivi\`ere \cite{Lamm-Riviere-2008} to  higher order linear elliptic systems in critical dimensions,  in the spirit of   Rivi\`ere and Struwe \cite{Riviere-Struve-2008}.
% or Lamm and   Rivi\`ere \cite{Lamm-Riviere-2008}. \end{problem}

\begin{problem}\label{prob:uniform} 	
Extend the   regularity theory of  Rivi\`ere and Struwe \cite{Riviere-Struve-2008} to  $2k$-th ($k\ge 2$) order linear elliptic systems in all  dimension $n$, $n\ge 2k$; particularly in the conformal case $n=2k$.
% or Lamm and   Rivi\`ere \cite{Lamm-Riviere-2008}.
\end{problem}

The prime goal of this paper is to extend the techniques of Rivi\`ere and Struwe \cite{Riviere-Struve-2008} to higher order systems, so as  to provide a direct approach for the H\"older regularity of weak solutions of system \eqref{eq: Longue-Gastel system} in the critical dimension $2k$, and hence provide a partial positive answer to Problem \ref{prob:uniform}. It is also natural to explore the system \eqref{eq: Longue-Gastel system} in $B^n$, $n>2k$.  We shall  provide a discussion in this respect in the last section.
%Note that in both papers \cite{deLongueville-Gastel-2019,Horter-Lamm-2020}, the author did not obtain higher regularity, such as H\"older continuity, $L^p$-theory, of weak solutions due to the irregularity of involved coefficients in \eqref{eq: Longue-Gastel system}.

%\subsection{Statement of main results}

Fix $\ensuremath{2k \geq 4,m\in\mathbb{N}}$. We consider the linear elliptic system \eqref{eq: Longue-Gastel system} of de Longueville and Gastel \cite{deLongueville-Gastel-2019}, that is,
\[
\Delta^{k}u=\sum_{i=1}^{k-1}\Delta^{i}\left\langle V_{i},du\right\rangle +\sum_{i=0}^{k-2}\Delta^{i}\delta\left(w_{i}du\right)+\left\langle V_0,du\right\rangle \qquad \text{in } B^{2k}.
\]
As in  \cite{deLongueville-Gastel-2019},  the coefficient functions are assumed to   satisfy
\begin{equation}\label{eq:Assumption 1}
\begin{aligned} & w_{i}\in W^{2i+2-k,2}\left(B^{2k},\mathbb{R}^{m\times m}\right)\text{ for }i\in\{0,\ldots,k-2\},\\
 & V_{i}\in W^{2i+1-k,2}\left(B^{2k},\mathbb{R}^{m\times m}\otimes\wedge^{1}\mathbb{R}^{2k}\right)\text{ for }i\in\{1,\ldots,k-1\},
\end{aligned}
\end{equation}
and $$V_{0}=d\eta+F$$ with
\begin{equation}\label{eq:Assumption 2}
\eta\in W^{2-k,2}\left(B^{2k},\boldsymbol{so(m)}\right)\quad \text{  and  }\quad  F\in W^{2-k,\frac{2k}{k+1},1}\left(B^{2k},\mathbb{R}^{m\times m}\otimes\wedge^{1}\mathbb{R}^{2k}\right).
\end{equation}
The definition of relevant function spaces will be given in Section \ref{sec:relevant function spaces} below.

Our main result of this paper is the following.
\begin{theorem}\label{thm:general case}
Suppose the assumptions \eqref{eq:Assumption 1} and \eqref{eq:Assumption 2} are satisfied. Then every weak solution of \eqref{eq: Longue-Gastel system} is locally H\"older continuous in $B^{2k}$.
\end{theorem}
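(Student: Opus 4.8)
The plan is to follow the strategy of Rivi\`ere and Struwe \cite{Riviere-Struve-2008}, adapted to order $2k$, replacing the conservation law of de Longueville and Gastel \cite{deLongueville-Gastel-2019} by a gauge-theoretic reformulation together with a direct Morrey-type decay iteration. First I would localize and rescale: since all the structural norms in \eqref{eq:Assumption 1}--\eqref{eq:Assumption 2} are absolutely continuous, on a sufficiently small ball $B_\rho(x_0)\subset B^{2k}$ the coefficient functions are small in their respective critical (Lorentz--)Sobolev norms. On such a ball, using the antisymmetry encoded in $\eta\in W^{2-k,2}(B^{2k},so(m))$, I would construct a higher order Uhlenbeck-type gauge $P\in W^{k,2}(B_\rho,SO(m))$ --- the higher order analogue of Uhlenbeck's theorem \cite{Uhlenbeck-1982}, available in the critical Sobolev setting as in \cite{Lamm-Riviere-2008,deLongueville-Gastel-2019} --- such that conjugation by $P$ turns the leading antisymmetric first order potential $d\eta$ into a potential that is divergence-free in the distributional sense and small, while the remaining error stays controlled in a subcritical Lorentz--Sobolev norm.

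Second, I would rewrite \eqref{eq: Longue-Gastel system} in terms of the gauged unknown. After commuting $P$ through the iterated Laplacians and integrating by parts, the system acquires the schematic form
$$\Delta^{k}\tilde u=\sum_{j}\Delta^{a_{j}}\,\mathrm{div}\big(G_{j}\big)+H,$$
where each $G_j$ is a product of a gauge quantity with a derivative of $u$ carrying a compensation (div--curl / Wente-type) structure produced by the gauge, and $H$ collects lower order contributions lying in a Lorentz space $L^{p,1}$ with $p$ strictly below the critical exponent $\tfrac{2k}{k+1}$, the subcritical integrability being inherited from $F\in W^{2-k,\frac{2k}{k+1},1}$. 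Representing $\tilde u$ via the Riesz potential of order $2k$ associated with the fundamental solution of $\Delta^{k}$, and invoking the mapping properties of Riesz potentials on Lorentz--Sobolev spaces together with the improved estimates for the div-structured and compensated pieces, I would derive a decay inequality
$$\Phi(\theta r)\le\tfrac12\,\Phi(r)+C\,r^{\alpha}$$
for the natural Morrey-type energy $\Phi(r)=r^{2-2k}\int_{B_r}|\nabla^{k}u|^{2}$ (or a Lorentz--Morrey refinement thereof), valid for all small $r$ with fixed $\theta\in(0,1)$ and some $\alpha>0$. Since the gauge $P$ itself solves an elliptic system of the same type, this step must be run for the coupled pair $(u,P)$ simultaneously, exactly as in \cite{Riviere-Struve-2008,Struwe-2008}.

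Third, iterating the decay inequality yields $\Phi(r)\lesssim r^{\alpha}$ for some $\alpha>0$, uniformly near each interior point, and the higher order Dirichlet growth theorem (Morrey--Campanato/Adams in the polyharmonic setting) then gives $u\in C^{0,\alpha}_{\mathrm{loc}}(B^{2k})$, which is Theorem \ref{thm:general case}.

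I expect the main obstacle to lie in the second step: extracting, by purely algebraic manipulations (integration by parts, commutators, and the gauge identities), the divergence and compensation structure from the top order terms $\Delta^{i}\langle V_i,du\rangle$ with $i$ close to $k-1$, where $V_i$ lives in a negative order Sobolev space and the products are defined only distributionally. Forcing each such term into either a div-structured piece with a genuine Wente-type gain or a truly subcritical $L^{p,1}$ remainder --- so that the error in the decay inequality carries a positive power of $r$ rather than merely an $o(1)$ modulus --- is precisely where the upgrade from continuity to H\"older continuity, and the bulk of the analytic work, will reside.
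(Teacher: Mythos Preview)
Your high-level architecture --- gauge, rewrite, Riesz potentials, decay iteration, Dirichlet growth --- matches the paper, but the analytic mechanism you describe in step two is not the one that works, and following your plan as written would run into the very obstruction this paper is designed to remove.

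First, you propose to run the decay for the coupled pair $(u,P)$ ``exactly as in \cite{Riviere-Struve-2008,Struwe-2008}''. That coupling is precisely Struwe's device in \cite{Struwe-2008}, and, as the introduction recalls, it forces one to assume that the coefficients $V_i,w_i,\eta,F$ obey pointwise growth bounds in terms of derivatives of $u$ --- hypotheses that are \emph{not} part of \eqref{eq:Assumption 1}--\eqref{eq:Assumption 2}. The paper decouples: the gauge $P$ is constructed once on a fixed ball with $\|dP\|_{W^{k-1,2}}\le C\epsilon$, and the iteration is run only on $\sum_{j=1}^{k}\|\nabla^{j}u\|_{L^{2k/j,\infty}(B_r)}$. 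The gauge never re-enters the loop, so no growth hypothesis on the coefficients is needed.

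Second, no div--curl or Wente compensation is used, and none is available for general coefficients. After the gauge one arrives at
\[
\Delta^{k-1}(P\Delta u)=\sum_{a=0}^{k-1}\nabla^{a}\Big(\sum_{i=1}^{k}K_{a,i}\,\nabla^{i}u\Big),\qquad K_{a,i}\in L^{\frac{2k}{2k-a-i},1},
\]
and the whole point is that plain H\"older in Lorentz spaces gives $K_{a,i}\nabla^{i}u\in L^{2k/(2k-a)}$ (the endpoint $a+i=2k$ landing in $L^{1}$ via the duality $L^{p,1}\cdot L^{p',\infty}\hookrightarrow L^{1}$), after which the Riesz potential $I_{2(k-1)-a}$ sends each term to $L^{k,\infty}$ with norm $\lesssim\epsilon\sum_j\|\nabla^j u\|_{L^{2k/j,\infty}}$. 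Splitting off the polyharmonic part yields
\[
\sum_{j}\|\nabla^{j}u\|_{L^{2k/j,\infty}(B_{\tau r})}\le C(\tau+\epsilon)\sum_{j}\|\nabla^{j}u\|_{L^{2k/j,\infty}(B_{r})},
\]
with no additive $Cr^{\alpha}$ term. Note in particular that $F\in W^{2-k,\frac{2k}{k+1},1}$ is \emph{critical}, not subcritical; the extra room comes from the Lorentz index $1$, not from a strictly smaller Lebesgue exponent. Thus the ``main obstacle'' you anticipate --- forcing a Wente structure out of the top-order $V_i$ and producing a subcritical remainder --- is a red herring: the Lorentz duality alone carries the argument, and this is exactly where the upgrade from continuity to H\"older continuity comes from.
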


The idea for proving Theorem \ref{thm:general case} is inspired by Wang \cite{Wang-2004-CPAM} and our recent work \cite{Guo-Xiang-2019-Boundary}, and we benefit a lot from the recent work of de Longueville and Gastel \cite{deLongueville-Gastel-2019} regarding the derivation of higher order elliptic system \eqref{eq: Longue-Gastel system}\footnote{In our initial manuscript, we only considered the higher order systems up to order six.}. In spirit of \cite{Riviere-Struve-2008} and \cite{Struwe-2008}, we first apply a Gauge transform to get an equivalent form of system \eqref{eq: Longue-Gastel system}. Then, we use a standard potential theory to derive certain decay estimates in Lorentz spaces concerning derivatives of $u$ from the Gauge equivalent system. Finally, the H\"older continuity of $u$ follows from Morrey's Dirichlet growth theorem \cite{Giaquinta-Book}. Note that in both papers \cite{deLongueville-Gastel-2019,Horter-Lamm-2020}, the authors obtained only continuity but not H\"older continuity. In this sense, our result also provides an improvement. As a simple application of Theorem \ref{thm:general case}, we point out that every weakly (extrinsic or intrinsic) polyharmonic mapping from $B^{2k}$ into a closed Riemannian manifold is thus smooth by   Gastel and Scheven \cite[Theorem 1.2]{Gastel-Scheven-2009CAG}.

Generally speaking, the techniques used in our main proofs are not really new. The idea of using Gauge transform to rewrite the system was already used in \cite{Riviere-Struve-2008} and \cite{Struwe-2008}. The use of Riesz potential theory, together with the theory of Lorentz spaces, already appeared in many places; see for instance \cite{Wang-2004-MZ,Wang-2004-CPAM}. An ingredient which plays a crucial role in our approach  is the duality between the Lorentz spaces $L^{p,\infty}$ and $L^{p/(p-1),1}$ for $1<p<\wq$, which goes back to Bethuel \cite{Bethuel-1992} for second order systems. Nevertheless, a suitable combination of all these (relatively well-known) ideas leads to an affirmative solution of Problem \ref{prob:uniform} in critical dimensions.

Theorem \ref{thm:general case} is stated for the critical dimensions and it is a natural question to ask for the supercritical dimensions (note that the approach of Rivi\`ere and Struwe \cite{Riviere-Struve-2008}  works for supercritical dimensions as well). We believe our approach also applies in the supercritical dimensional case, but there is a serious technical difficulty from harmonic analysis preventing a straightforward extension to the supercritical dimensional case. We shall briefly discuss this difficulty in the final section.

The paper is organized as follows. In section \ref{sec: preliminaries} we present some knowledge on necessary function spaces and Riesz potential theory. In section \ref{sec: Proof of main results} we apply Riesz potential theory to prove Theorem \ref{thm:general case} in the fourth order case as a warm-up, and we hope this would make our exposition more friendly to the readers. Then we prove Theorem \ref{thm:general case} in the following Section \ref{sec: higher oder system}. In the last section, we discuss a possible approach to extend Theorem \ref{thm:general case} in supercritical dimensions.

\textbf{Acknowledgement.} Both authors are grateful to Prof.~T. Rivi\`ere, Prof.~M. Struwe and Prof.~T. Lamm for their interest in this work and for the pleasant discussions during the preparation of this work. They are deeply indebted to the anonymous referees for their extremely valuable comments and corrections that greatly improved  the exposition.

\section{Preliminaries and auxiliary results}\label{sec: preliminaries}

\subsection{Lorentz-Sobolev function spaces and related}\label{sec:relevant function spaces}

In this section, we recall the definition of Lorentz and Lorentz-Sobolev spaces and collect some basic facts that will be used later (we recommend the interested authors to read the references \cite{Adams-book} for more information about these spaces). Throughout this section, we will assume that $\Om$ is a domain in $\R^{n}$.

\subsubsection{Lorentz spaces}

For a measurable function $f\colon \Om\to\R$, denote by $\de_{f}(t)=|\{x\in\Om:|f(x)|>t\}|$
its distributional function and by $f^{\ast}(t)=\inf\{s>0:\de_{f}(s)\le t\}$,
$t\ge0$, the nonincreasing rearrangement of $|f|$. Define
\begin{eqnarray*}
	f^{\ast\ast}(t)\equiv\frac{1}{t}\int_{0}^{t}f^{\ast}(s)\D s, &  & t>0.
\end{eqnarray*}
The\emph{ Lorentz space} $L^{p,q}(\Om)$ ($1<p<\wq,1\le q\le\wq$)
is the space of measurable functions $f:\Om\to\R$ such that
\[
\|f\|_{L^{p,q}(\Om)}\equiv\begin{cases}
\left(\int_{0}^{\wq}(t^{1/p}f^{\ast\ast}(t))^{q}\frac{\D t}{t}\right)^{1/q}, & \text{if }1\le q<\wq,\\
\sup_{t>0}t^{1/p}f^{\ast\ast}(t) & \text{if }q=\wq
\end{cases}
\]
is finite. It is well-known that $L^{p,\infty}(\Omega)$ is just the weak $L^p$-space consisting of measurable functions $f$ in $\Om$ for which
$\sup_{t>0}t^p\left|\big\{x\in \Omega: |f(x)|> t\big \}\right|<\infty$ holds.   We shall also use $L^{1,\wq}$ to denote the weak $L^1$-space.

The following H\"older's inequality in Lorentz space is well-known.

\begin{proposition}[\cite{ONeil-1963}] \label{prop: Lorentz-Holder inequality}
Let $1<p_{1},p_{2}<\wq$ and $1\le q_{1},q_{2}\le\wq$ be such that
\begin{eqnarray*}
\frac{1}{p}=\frac{1}{p_{1}}+\frac{1}{p_{2}}\le1 & \text{and} & \frac{1}{q}=\frac{1}{q_{1}}+\frac{1}{q_{2}}\le1.
\end{eqnarray*}
Then, $f\in L^{p_{1},q_{1}}(\Om)$ and $g\in L^{p_{2},q_{2}}(\Om)$
implies $fg\in L^{p,q}(\Om)$. Moreover,
\[
\|fg\|_{L^{p,q}(\Om)}\le\|f\|_{L^{p_{1},q_{1}}(\Om)}\|g\|_{L^{p_{2},q_{2}}(\Om)}.
\]
\end{proposition}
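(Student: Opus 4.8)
The plan is to reduce the product estimate to a one-dimensional inequality for decreasing rearrangements, the crux being the pointwise bound
\[
\int_{0}^{t}(fg)^{\ast}(s)\,\D s\ \le\ \int_{0}^{t}f^{\ast}(s)\,g^{\ast}(s)\,\D s\qquad\text{for all }t>0,
\]
equivalently $t\,(fg)^{\ast\ast}(t)\le\int_{0}^{t}f^{\ast}g^{\ast}\,\D s$. Granting this, I would use that $f^{\ast},g^{\ast}$ are nonincreasing, so that $f^{\ast}\le f^{\ast\ast}$ and $g^{\ast}\le g^{\ast\ast}$, together with $\tfrac1p=\tfrac1{p_{1}}+\tfrac1{p_{2}}$, to obtain
\[
(fg)^{\ast\ast}(t)\ \le\ \frac1t\int_{0}^{t}\big(s^{1/p_{1}}f^{\ast\ast}(s)\big)\big(s^{1/p_{2}}g^{\ast\ast}(s)\big)\,s^{-1/p}\,\D s.
\]
Writing $A(s)=s^{1/p_{1}}f^{\ast\ast}(s)$ and $B(s)=s^{1/p_{2}}g^{\ast\ast}(s)$, so that $\|A\|_{L^{q_{1}}((0,\wq),\,\D s/s)}=\|f\|_{L^{p_{1},q_{1}}(\Om)}$ and likewise for $B$, the substitution $s=\sigma t$ turns the last display into $t^{1/p}(fg)^{\ast\ast}(t)\le\int_{0}^{1}\sigma^{1-1/p}\,A(\sigma t)B(\sigma t)\,\D\sigma/\sigma$, a convolution on the multiplicative group $\big((0,\wq),\D s/s\big)$ against the fixed integrable kernel $\sigma\mapsto\sigma^{1-1/p}\chi_{(0,1)}(\sigma)$. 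Since $\tfrac1q=\tfrac1{q_{1}}+\tfrac1{q_{2}}$, Hölder's inequality gives $\|AB\|_{L^{q}(\D s/s)}\le\|A\|_{L^{q_{1}}(\D s/s)}\|B\|_{L^{q_{2}}(\D s/s)}=\|f\|_{L^{p_{1},q_{1}}}\|g\|_{L^{p_{2},q_{2}}}$, and Minkowski's integral inequality (Young's inequality for the convolution above) then bounds $\|t^{1/p}(fg)^{\ast\ast}\|_{L^{q}(\D t/t)}=\|fg\|_{L^{p,q}(\Om)}$ by a constant times $\|AB\|_{L^{q}(\D s/s)}$; the case $q=\wq$ is identical, estimating the supremum directly. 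This yields the claimed inequality; extracting the sharp constant $1$ (in place of the $p/(p-1)$ that the kernel's $L^{1}$-norm produces) requires the more delicate bookkeeping of O'Neil, and for that I would simply cite \cite{ONeil-1963}.

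It remains to prove the rearrangement bound, and here I would invoke two standard facts: the Hardy--Littlewood inequality $\int_{\Om}|\varphi\psi|\,\D x\le\int_{0}^{\wq}\varphi^{\ast}(s)\psi^{\ast}(s)\,\D s$, and the identity $\int_{0}^{t}\varphi^{\ast}(s)\,\D s=\sup\big\{\int_{E}|\varphi|\,\D x:E\subset\Om,\ |E|\le t\big\}$. For a measurable set $E\subset\Om$ with $|E|\le t$, apply Hardy--Littlewood to $f\chi_{E}$ and $g\chi_{E}$; restricting to a subset does not increase the rearrangement, so $(f\chi_{E})^{\ast}\le f^{\ast}$ and $(g\chi_{E})^{\ast}\le g^{\ast}$ pointwise, while $(f\chi_{E})^{\ast}$ vanishes on $(|E|,\wq)$, whence
\[
\int_{E}|fg|\,\D x\ \le\ \int_{0}^{|E|}f^{\ast}(s)g^{\ast}(s)\,\D s\ \le\ \int_{0}^{t}f^{\ast}(s)g^{\ast}(s)\,\D s.
\]
Taking the supremum over all such $E$ and using the second fact with $\varphi=fg$ gives $\int_{0}^{t}(fg)^{\ast}\,\D s\le\int_{0}^{t}f^{\ast}g^{\ast}\,\D s$, as needed.

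The main obstacle is precisely this rearrangement inequality: it is the only place where the measure-theoretic structure of $\Om$ and the definition of the rearrangement genuinely enter, and it is what makes the three pairs of indices fit together. Everything downstream is one-variable real analysis --- Hölder's inequality in $L^{q}(\D t/t)$ exploiting $1/q=1/q_{1}+1/q_{2}$, and a Hardy/Young-type estimate for the averaging operator $\psi\mapsto\frac1t\int_{0}^{t}\psi$ --- with the only remaining subtlety being to track constants tightly enough to reach the sharp value $1$, which is the content of O'Neil's original argument in \cite{ONeil-1963}.
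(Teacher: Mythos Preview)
Your argument is correct and is essentially the standard proof of O'Neil's H\"older inequality; the paper itself does not supply a proof of this proposition, but simply cites \cite{ONeil-1963} as a known result. The rearrangement bound $\int_{0}^{t}(fg)^{\ast}\,\D s\le\int_{0}^{t}f^{\ast}g^{\ast}\,\D s$ is exactly O'Neil's key lemma, and your derivation via Hardy--Littlewood applied to $f\chi_{E},g\chi_{E}$ is clean and correct. One small remark: as you already note, the constant your convolution argument naturally produces is $p'=p/(p-1)$ rather than $1$; this matches O'Neil's original statement, and the constant $1$ written in the paper's display should be read as ``up to a universal constant,'' which is all that is ever used downstream in the paper.
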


\begin{proposition}[\cite{Ziemer}]\label{prop: Lorentz to Lorentz}
For $1<p<\wq$ and $1\le q_{1}\le q_{2}\le\wq$, there holds $L^{p}(\Om)=L^{p,p}(\Om)$
and $L^{p,q_{1}}(\Om)\subset L^{p,q_{2}}(\Om)$ with
\[
\|f\|_{L^{p,q_{2}}(\Om)}\le C(p,q_1,q_2)\|f\|_{L^{p,q_{1}}(\Om)}.
\]
Moreover, if $|\Om|<\wq$, then $L^{p,q}(\Om)\supset L^{r,s}(\Om)$
for $1<p<r<\wq$, $1\le q,s\le\wq$, and
\[
\|f\|_{L^{p,p}(\Om)}\le C_{r,p} |\Om|^{\frac{1}{p}-\frac{1}{r}}\|f\|_{L^{r,\wq}(\Om)}.
\]
\end{proposition}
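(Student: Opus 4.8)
The plan is to reduce all three assertions to elementary one--variable estimates for the non-increasing rearrangement $f^{\ast}$ and its Hardy average $f^{\ast\ast}$, using only three standard facts: (i) the equimeasurability identity $\int_{\Om}|f|^{p}=\int_{0}^{\wq}(f^{\ast}(t))^{p}\,\D t$; (ii) Hardy's inequality $\|f^{\ast\ast}\|_{L^{p}(0,\wq)}\le\frac{p}{p-1}\|f^{\ast}\|_{L^{p}(0,\wq)}$ for $p>1$, together with the pointwise bound $f^{\ast}(t)\le f^{\ast\ast}(t)$; and (iii) the monotonicity of $t\mapsto f^{\ast\ast}(t)$, which follows from $\frac{\D}{\D t}f^{\ast\ast}(t)=t^{-1}\big(f^{\ast}(t)-f^{\ast\ast}(t)\big)\le0$.

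\emph{Step 1 ($L^{p}=L^{p,p}$).} A direct substitution in the definition gives $\|f\|_{L^{p,p}(\Om)}=\big(\int_{0}^{\wq}(f^{\ast\ast}(t))^{p}\,\D t\big)^{1/p}$. By (ii) this is at most $\frac{p}{p-1}\big(\int_{0}^{\wq}(f^{\ast}(t))^{p}\,\D t\big)^{1/p}=\frac{p}{p-1}\|f\|_{L^{p}(\Om)}$, while $f^{\ast}\le f^{\ast\ast}$ gives $\|f\|_{L^{p}(\Om)}\le\|f\|_{L^{p,p}(\Om)}$; hence the two spaces coincide with equivalent norms. \emph{Step 2 (nesting in the second exponent).} Fix $q_{1}\le q_{2}$. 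Using (iii) and the identity $\frac{q_{1}}{p}\int_{0}^{t}s^{q_{1}/p-1}\,\D s=t^{q_{1}/p}$, for every $t>0$ we obtain
$$
t^{1/p}f^{\ast\ast}(t)=\Big(\tfrac{q_{1}}{p}\int_{0}^{t}s^{q_{1}/p-1}(f^{\ast\ast}(t))^{q_{1}}\,\D s\Big)^{1/q_{1}}\le\Big(\tfrac{q_{1}}{p}\int_{0}^{t}\big(s^{1/p}f^{\ast\ast}(s)\big)^{q_{1}}\,\tfrac{\D s}{s}\Big)^{1/q_{1}}\le\Big(\tfrac{q_{1}}{p}\Big)^{1/q_{1}}\|f\|_{L^{p,q_{1}}(\Om)},
$$
which is exactly the case $q_{2}=\wq$. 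For $q_{1}<q_{2}<\wq$, write $(t^{1/p}f^{\ast\ast})^{q_{2}}=(t^{1/p}f^{\ast\ast})^{q_{2}-q_{1}}(t^{1/p}f^{\ast\ast})^{q_{1}}$, bound the first factor by the supremum just obtained, integrate against $\D t/t$, and take $q_{2}$-th roots to get $\|f\|_{L^{p,q_{2}}(\Om)}\le C(p,q_{1},q_{2})\|f\|_{L^{p,q_{1}}(\Om)}$.

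\emph{Step 3 (the case $|\Om|<\wq$).} Here $f^{\ast}(t)=0$ for $t>|\Om|$, and since $\|f\|_{L^{r,\wq}(\Om)}=\sup_{t}t^{1/r}f^{\ast\ast}(t)\ge\sup_{t}t^{1/r}f^{\ast}(t)$ we have the pointwise bound $f^{\ast}(t)\le t^{-1/r}\|f\|_{L^{r,\wq}(\Om)}$. When $p<r$ one has $p/r<1$, so by Step 1
$$
\|f\|_{L^{p,p}(\Om)}\le C_{p}\Big(\int_{0}^{|\Om|}(f^{\ast}(t))^{p}\,\D t\Big)^{1/p}\le C_{p}\|f\|_{L^{r,\wq}(\Om)}\Big(\int_{0}^{|\Om|}t^{-p/r}\,\D t\Big)^{1/p}=C_{r,p}\,|\Om|^{\frac1p-\frac1r}\|f\|_{L^{r,\wq}(\Om)}.
$$
The general inclusion $L^{r,s}(\Om)\subset L^{p,q}(\Om)$ for $1<p<r<\wq$, $1\le q,s\le\wq$, then follows by chaining $L^{r,s}\hookrightarrow L^{r,\wq}\hookrightarrow L^{p,1}\hookrightarrow L^{p,q}$: the first and last embeddings are instances of Step 2, while $L^{r,\wq}\hookrightarrow L^{p,1}$ comes from the same integral estimate after splitting $\int_{0}^{\wq}$ at $|\Om|$ and using the decay $f^{\ast\ast}(t)\lesssim|\Om|^{1-1/r}\,t^{-1}\|f\|_{L^{r,\wq}(\Om)}$ for $t\ge|\Om|$. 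I do not expect a genuine obstacle here: the whole statement is routine rearrangement theory, Hardy's inequality being the only external input. The two points deserving care are keeping $f^{\ast}$ and $f^{\ast\ast}$ carefully apart — the $L^{p,q}$--norm in the excerpt is built from $f^{\ast\ast}$, which is precisely what makes Hardy's inequality decisive in Step 1 and the monotonicity (iii) decisive in Step 2 — and treating the endpoint exponents $q,q_{1},q_{2}\in\{1,\wq\}$ separately, where the $L^{q}(\D t/t)$--integrals degenerate into suprema.
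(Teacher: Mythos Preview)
Your proof is correct and follows the standard rearrangement--theoretic route (Hardy's inequality for $f^{\ast}\mapsto f^{\ast\ast}$, monotonicity of $f^{\ast\ast}$, and a direct $L^{p}$ estimate on $(0,|\Om|)$). There is nothing to compare against in the paper itself: the proposition is stated there as a citation from Ziemer's book, with no proof given. Your argument is essentially the one found in such references, so in that sense you have reproduced the intended proof rather than diverged from it.

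One small remark on presentation: in Step~3 you first prove the quantitative inequality $\|f\|_{L^{p,p}}\le C_{r,p}|\Om|^{1/p-1/r}\|f\|_{L^{r,\wq}}$ and only afterwards deduce the qualitative inclusion $L^{r,s}\subset L^{p,q}$ by the chain $L^{r,s}\hookrightarrow L^{r,\wq}\hookrightarrow L^{p,1}\hookrightarrow L^{p,q}$. This is fine, and your sketch of the middle embedding (splitting $\int_{0}^{\wq}$ at $|\Om|$ and using $f^{\ast\ast}(t)\le|\Om|^{1-1/r}t^{-1}\|f\|_{L^{r,\wq}}$ for $t\ge|\Om|$) is correct; the tail integral $\int_{|\Om|}^{\wq}t^{1/p-2}\,\D t$ converges precisely because $p>1$. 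You might note explicitly that the constant in the displayed inequality is $C_{r,p}=\tfrac{p}{p-1}\big(\tfrac{r}{r-p}\big)^{1/p}$, since the $\tfrac{p}{p-1}$ factor comes from Step~1 and the remaining factor from the elementary integral $\int_{0}^{|\Om|}t^{-p/r}\,\D t$.
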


The last inequality in Proposition~\ref{prop: Lorentz to Lorentz} implies that for all $B^{2k}_r\subset \R^{2k}$ and all $1\le p<2k$, there exists a constant $C=C(k,p)>0$, such that
\begin{equation}\label{eq: Lorentz-to-Morrey}
r^{p-2k}\int_{B_{r}^{2k}}|\na u|^{p}\le C\|\na u\|_{L^{2k,\infty}(B_{r}^{2k})}^{p}.
\end{equation}
In our later proofs, this will be combined with Morrey's Dirichlet growth theorem to derive H\"older continuity of a given solution.

\subsubsection{Lorentz-Sobolev spaces}

For $k\in \mathbb{N}$, $1\leq p, q\leq \infty$, the Lorentz-Sobolev space $W^{k,p,q}(\Omega)$ is defined to be the space of all functions $f\colon \Omega\to \R$ which are weakly differentiable up to order $k$ with $D^{\alpha}f\in L^{p,q}(\Omega)$ for all $|\alpha|\leq k$. The following fact will be used:
\begin{itemize}
 \item Let $\Om\subset\R^n$ be a bounded smooth domain, $k\in \N , 1<p<\wq, 1\le q<\wq$. There exists a bounded linear operator $E: W^{k,p,q}(\Omega)\to W^{k,p,q}(\R^n)$, such that if $f\in W^{k,p,q}(\Omega)$, then $Ef=f$ a.e. in $\Om$, and there exists a constant $C>0$ such that for all $f\in W^{k,p,q}(\Omega)$, there holds
   \[\|Ef\|_{W^{k,p,q}(\R^n)}\le C\|f\|_{W^{k,p,q}(\Omega)};\]  see for instance \cite[Theorem 2]{DeVore-Scherer-1979}, \cite[Page 19]{deLongueville-2018}.
\end{itemize}

We shall also need the Lorentz-Sobolev spaces with negative exponents. For $k\in \N$, $1\leq p,q\leq \infty$, the Lorentz-Sobolev space $W^{-k,p,q}(\Omega)$ is defined to be the space of all distributions $f$ on $\Om$ of the form $f=\sum_{|\alpha|\leq k}D^{\alpha}f_{\alpha}$ with $f_\alpha\in L^{p,q}(\Om)$. The corresponding norm is defined as
\[ \|f\|_{W^{k,p,q}(\Omega)}:=\inf \sum_{|\alpha|\leq k}\|f_\alpha\|_{L^{p,q}(\Omega)},\]
where the infimum is taken over all decompositions of $f$ as given in the definition.

Let $\Omega\subset \R^n$ be a bounded smooth domain. The following facts about Lorentz-Sobolev spaces with negative exponents can be found in \cite{deLongueville-2018,deLongueville-Gastel-2019}.
\begin{itemize}
\item (Generalized H\"older's inequality) Suppose $f\in W^{-k_0,p_0,q_0}(\Omega)$ and $g\in W^{k_1,p_1,q_1}(\Omega)$ with $k_0,k_1\in \N$, $1<p_0,p_1<\infty$ and $1\leq q_0,q_1<\infty$. If $k_0\le k_1$, $\frac{1}{p_0}+\frac{1}{p_1}\leq 1$ and $k_1p_1<n$, then $fg\in W^{-k_0,s,t}(\Omega)$ with $s=\frac{np_0p_1}{n(p_0+p_1)-k_1p_0p_1}$ and $\frac{1}{t}=\min\big\{1,\frac{1}{q_0}+\frac{1}{q_1}\big\}$. Moreover,
\begin{equation}\label{eq:product norm for Lorentz-Sobolev}
\|fg\|_{W^{-k_0,s,t}(\Om)}\leq C\|f\|_{W^{-k_0,p_0,q_0}(\Om)}\|g\|_{W^{k_1,p_1,q_1}(\Om)}.
\end{equation}
The assertion continues to hold in the case $k_1p_1=n$ if we additionally assume $g\in L^\wq$.

\item (Sobolev embeddings of Lorentz-Sobolev functions with negative exponent) If $k\in \mathbb{Z}$, $l\in \N$, $1<p<\frac{n}{l}$ and $1\leq q\leq \infty$, then every $f\in W^{k,p,q}(\Omega)$ is also in $W^{k-l,\frac{np}{n-lp},q}(\Omega)$ with
$$\|f\|_{W^{k-l,\frac{np}{n-lp},q}(\Omega)}\leq C\|f\|_{W^{k,p,q}(\Omega)}.$$
\end{itemize}

%Sobolev embeddings:
%\[
%W^{k,2}(B^{2k})\hookrightarrow W^{l,\frac{2k}{l},2}(B^{2k})\hookrightarrow W^{1,2k,2}(B^{2k})\hookrightarrow BMO(B^{2k}).
%\]
%\[
%W^{l,2}(B^{2k})\hookrightarrow L^{\frac{2k}{k-l},2}(B^{2k}).
%\]

\subsection{Fractional Riesz operators}

Let $I_\alpha$ be the standard fractional Riesz operator, that is, the operator whose convolution kernel is $|x|^{\alpha-n}$, $x\in \R^n$. The following estimates on fractional Riesz operators between Lorentz spaces are well-known.

\begin{proposition}\label{prop:Riesz potential}
For $0<\alpha<n$, $1<p<n/\al$, $1\le q\leq q'\le\wq$, the fractional Riesz operators
\[
I_{\al}\colon L^{p,q}(\R^{n})\to L^{\frac{np}{n-\al p},q'}(\R^{n})
\]
and
\[
I_{\al}\colon L^{1}(\R^{n})\to L^{\frac{n}{n-\al},\wq}(\R^{n})
\]
are bounded.
\end{proposition}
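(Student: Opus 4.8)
The goal is the boundedness of the Riesz potential $I_\alpha$ between the indicated Lorentz spaces. I would treat the two displayed mappings together, since the second is the endpoint case $p=1$ of the first. The core of the argument is the \emph{Hardy--Littlewood--Sobolev inequality in Lorentz form}, which one proves by viewing $I_\alpha$ as a convolution against $|x|^{\alpha-n}$ and carefully keeping track of rearrangement-type quantities; alternatively one can invoke the abstract real interpolation machinery. I would present the interpolation route as the main line, with the direct route as a fallback, since the interpolation proof is short given standard tools.

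\emph{Step 1 (Reduce to a weak-type/strong-type pair at integer-ish endpoints).} First observe that the kernel $|x|^{\alpha-n}$ lies in $L^{\frac{n}{n-\alpha},\infty}(\R^n)$: indeed $\bigl|\{x:|x|^{\alpha-n}>t\}\bigr| = \bigl|\{x:|x|<t^{-1/(n-\alpha)}\}\bigr| = c_n\, t^{-n/(n-\alpha)}$, so its distribution function is exactly comparable to that of a weak-$L^{n/(n-\alpha)}$ function. Then O'Neil's convolution inequality in Lorentz spaces (a Young-type inequality: if $f\in L^{p_1,q_1}$ and $g\in L^{p_2,q_2}$ with $\frac1{p_1}+\frac1{p_2} = 1+\frac1r>1$ and $\frac1{q_1}+\frac1{q_2}\ge\frac1s$, then $f*g\in L^{r,s}$) applied with $g=|x|^{\alpha-n}\in L^{\frac{n}{n-\alpha},\infty}$ gives at once: for $f\in L^{p,q}$ with $\frac1p+\frac{n-\alpha}{n} = 1+\frac1r$, i.e. $r=\frac{np}{n-\alpha p}$, one gets $I_\alpha f = f*|x|^{\alpha-n}\in L^{r,q}$, using $\frac1q+\frac1\infty=\frac1q\ge\frac1q$ for the second index. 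This already yields the first mapping with $q'=q$; and for $f\in L^1$, O'Neil's inequality with the convention for $L^1 = L^{1,1}$ gives $I_\alpha f\in L^{\frac{n}{n-\alpha},\infty}$, which is the second mapping.

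\emph{Step 2 (Upgrade the second index $q$ to any $q'\ge q$).} This is immediate from the Lorentz-space inclusion recalled in Proposition~\ref{prop: Lorentz to Lorentz}: on $\R^n$ (or after noting the target index $r=\frac{np}{n-\alpha p}$ is fixed) one has $L^{r,q}(\R^n)\subset L^{r,q'}(\R^n)$ with a bounded inclusion whenever $q\le q'$. Composing with the bound from Step~1 gives $I_\alpha\colon L^{p,q}(\R^n)\to L^{r,q'}(\R^n)$ bounded for every $q'\in[q,\infty]$, which is exactly the claimed statement. If one prefers to avoid citing O'Neil's convolution theorem, the alternative is to prove the two endpoint strong/weak estimates $I_\alpha\colon L^{p_0}\to L^{r_0}$ (strong, via Hardy--Littlewood--Sobolev, splitting the kernel into near and far parts at a radius optimized against the level $t$) for $1<p_0<n/\alpha$, and $I_\alpha\colon L^1\to L^{n/(n-\alpha),\infty}$ (weak, same kernel splitting), and then apply the Marcinkiewicz/Stein--Weiss real interpolation theorem for Lorentz spaces: interpolating between $(L^{p_0},L^{r_0})$ and $(L^{p_1},L^{r_1})$ with $1<p_0<p<p_1<n/\alpha$ produces $I_\alpha\colon L^{p,q}\to L^{r,q}$ for all $1\le q\le\infty$, and then Step~2 again relaxes the target index.

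\emph{Main obstacle.} There is no deep obstacle here — this is classical harmonic analysis — but the one point requiring genuine care is the bookkeeping of indices in O'Neil's inequality (or, in the interpolation route, checking that the interpolation parameters $\theta$ are chosen so that $\frac1r = \frac{1-\theta}{r_0}+\frac\theta{r_1}$ matches $\frac{n-\alpha p}{np}$ while $\frac1p = \frac{1-\theta}{p_0}+\frac\theta{p_1}$; this forces the Sobolev relation and is consistent precisely because both endpoints satisfy $\frac1{r_i} = \frac1{p_i}-\frac\alpha n$). The restriction $p<n/\alpha$ is exactly what keeps $r=\frac{np}{n-\alpha p}$ finite and positive, and the restriction $p>1$ (resp.\ the weak target at $p=1$) is the familiar endpoint failure of strong-type bounds; both are visible already from the scaling $I_\alpha(f(\lambda\cdot))=\lambda^{-\alpha}(I_\alpha f)(\lambda\cdot)$, which pins down the target exponent and shows no other $r$ can work.
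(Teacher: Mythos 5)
The paper does not prove this proposition; it simply records it as a well-known fact about Riesz potentials on Lorentz spaces (citing nothing beyond the general references for O'Neil's inequality). Your argument is the standard one and is correct: $|x|^{\alpha-n}\in L^{\frac{n}{n-\alpha},\infty}(\R^n)$, O'Neil's convolution theorem gives $I_\alpha\colon L^{p,q}\to L^{\frac{np}{n-\alpha p},q}$, and the nesting $L^{r,q}\subset L^{r,q'}$ for $q\le q'$ (the paper's Proposition~\ref{prop: Lorentz to Lorentz}) relaxes the second index. The only point I would flag is the $L^1$ endpoint: O'Neil's convolution inequality in its usual formulation requires both primary exponents strictly greater than $1$, so the weak-type bound $I_\alpha\colon L^1\to L^{\frac{n}{n-\alpha},\infty}$ should be obtained by the classical near/far kernel splitting (as in Stein) rather than by formally setting $L^1=L^{1,1}$ in O'Neil; you already sketch exactly that splitting as your fallback, so the proof is complete once that route is used for the endpoint.
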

In the later proof, we will mainly use the special case $q'=\wq$.

\section{Warm up: the fourth order elliptic system}\label{sec: Proof of main results}

In this section, we shall address Problem \ref{prob:Struwe} and demonstrate the general scheme of our approach to establish interior H\"older regularity of solutions without using conservation law. The proof of Theorem \ref{thm:general case} in the next section shall be completely similar to what we presented in this section, but of course with much heavier computation.
%we shall state two main theorems, addressing Problem \ref{prob:Struwe} and Problem \ref{prob:Lamm-Riviere}, respectively. The approach to both theorems are rather similar and does not rely on the conservation law or Coulomb frames.

Let $B_{10}^4=\{x\in\R^{4}:|x|<10\}$ and $u\in W^{2,2}(B_{10}^4,\R^{m})$.
Consider the following fourth order linear elliptic system of Struwe \cite{Struwe-2008}
\begin{eqnarray}
\De^{2}u=\De(D\cdot\na u)+{\rm div}(E\cdot \na u)+F\cdot\na u &  & \text{in }B_{10}^4\label{eq: Lamm-Riviere system n=4}
\end{eqnarray}
with
\[
F=\De\Om+G.
\]
Here, we make the following regularity assumptions:
\begin{eqnarray}
D\in W^{1,2}(B_{10}^4, M_m\otimes \Lambda^1\R^4), & E\in L^{2}(B_{10}^4,M_m\otimes\Lambda^{1}\R^{4}),\label{eq: coefficients D-E n=4}
\end{eqnarray}
and
\begin{equation}
G\in L^{\frac{4}{3},1}(B_{10}^4,M_{m}\otimes\Lambda^{1}\R^{4}),\label{eq: coefficient G n=4}
\end{equation}
\begin{equation}
\Om\in W^{1,2}(B_{10}^4,so_{m}\otimes\Lambda^{1}\R^{4}).\label{eq: coefficients Omega n=4}
\end{equation}
In Section \ref{subsec:regularity assumptions}  we shall see that these regularity assumptions are satisfied  when $u$ is a stationary biharmonic mapping.

Our first main result gives an affirmative answer to Problem \ref{prob:Struwe} in the conformal case $n=4$ for the system \eqref{eq: Lamm-Riviere system n=4}.
\begin{theorem}\label{thm:main thm}
Suppose $D,E,G$ and $\Om$ satisfy the regularity assumptions \eqref{eq: coefficients D-E n=4}, \eqref{eq: coefficient G n=4}, \eqref{eq: coefficients Omega n=4}. Then every
 weak solution of system \eqref{eq: Lamm-Riviere system n=4} in $B^4_{10}$
	is locally  H\"older continuous.
\end{theorem}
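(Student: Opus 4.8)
The plan is to follow the Rivi\`ere--Struwe philosophy: first gauge away the antisymmetric top-order part of the potential, then extract a sub-critical Morrey-type decay estimate for $\nabla u$ in the Lorentz space $L^{4,\infty}$, and finally feed this into Morrey's Dirichlet growth theorem to conclude H\"older continuity. Concretely, I would fix a small ball $B_\rho\subset B^4_{10}$ and, using the smallness of $\|\Omega\|_{W^{1,2}(B_\rho)}$ after rescaling, construct a Uhlenbeck-type gauge $P\in W^{2,2}(B_\rho,SO_m)$ (and an associated $W^{2,2}$ form $\xi$ with values in $so_m$) such that conjugating \eqref{eq: Lamm-Riviere system n=4} by $P$ replaces the problematic $\Delta\Omega\cdot\nabla u$ term by terms of the form $\nabla P\ast\nabla\Omega\ast\nabla u$, $\operatorname{div}(\nabla\xi\ast\nabla u)$, etc., which are all ``one order better'' in integrability. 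The new system has the schematic shape
\[
\Delta^2(Pu)=\Delta(\widetilde D\cdot\nabla u)+\operatorname{div}(\widetilde E\cdot\nabla u)+\widetilde G\cdot\nabla u+\text{(good lower order terms)},
\]
with $\widetilde D\in W^{1,2}$, $\widetilde E\in L^2$, $\widetilde G\in L^{4/3,1}$, i.e. exactly the same class of coefficients but now \emph{without} any derivative of an $L^2$ object hitting $\nabla u$ at top order.

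Next I would localize via a cutoff and represent the solution through the biharmonic Green/Riesz potential: writing $u=u_h+$ (Riesz potentials of the right-hand side), where $u_h$ is biharmonic hence smooth with good decay, I would estimate each potential term using Proposition~\ref{prop:Riesz potential} together with the Lorentz--H\"older inequality Proposition~\ref{prop: Lorentz-Holder inequality}. The key point is a fixed-point/iteration estimate: one shows that on $B_\rho$,
\[
\|\nabla u\|_{L^{4,\infty}(B_{\theta\rho})}\le C\theta^{\gamma}\|\nabla u\|_{L^{4,\infty}(B_\rho)}+C\big(\|\widetilde D\|_{W^{1,2}(B_\rho)}+\|\widetilde E\|_{L^2(B_\rho)}+\|\widetilde G\|_{L^{4/3,1}(B_\rho)}\big)\|\nabla u\|_{L^{4,\infty}(B_\rho)}+(\text{lower order}),
\]
where the $L^{4,\infty}$--$L^{4/3,1}$ duality (as in Bethuel) is what makes the $\widetilde G\cdot\nabla u\in L^1$ term controllable after a Riesz potential. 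Choosing $\theta$ small, then $\rho$ small enough that the coefficient norms are tiny (absolute continuity of the integral, since $D,\Omega\in W^{1,2}$, $E\in L^2$, $G\in L^{4/3,1}$), the iteration closes and yields $\|\nabla u\|_{L^{4,\infty}(B_r)}\le C r^{\gamma}$ for all small $r$.

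Finally, I would invoke the inequality \eqref{eq: Lorentz-to-Morrey}, which converts the $L^{4,\infty}$ bound into a Morrey bound $r^{p-4}\int_{B_r}|\nabla u|^p\le C r^{p\gamma}$ for some $1\le p<4$; Morrey's Dirichlet growth theorem (\cite{Giaquinta-Book}) then gives $u\in C^{0,\gamma}_{\mathrm{loc}}$, which is the assertion of Theorem~\ref{thm:main thm}. The main obstacle I expect is the gauge construction and the bookkeeping of the conjugated system: one must verify that \emph{every} term produced by conjugating a fourth-order operator by $P$ (there are many, involving $\nabla P$, $\nabla^2 P$, $\xi$, $d\xi$, products thereof with $\Omega$, $D$, $E$) lands in the correct function space so that, after applying the Riesz potential, it contributes either a term with a small coefficient times $\|\nabla u\|_{L^{4,\infty}}$ or a genuinely subcritical $r^{\gamma}$ gain — in particular checking that no term is merely critical with a non-small constant. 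A secondary technical point is ensuring the gauge $P$ itself enjoys the decay $\|\nabla P\|_{L^{4,\infty}(B_r)}\lesssim r^{\gamma}$, which is needed to treat the terms where $u$ appears through $Pu$ rather than $u$; this is handled by the same iteration applied simultaneously to the coupled system for $(u,P)$, exactly as in \cite{Riviere-Struve-2008,Struwe-2008}, but here one gets the stronger H\"older (rather than merely continuity) conclusion because the Lorentz-space estimates are quantitative with a power gain.
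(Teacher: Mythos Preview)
Your overall strategy---gauge transform, Riesz-potential estimates in Lorentz spaces, a decay inequality for $\|\nabla u\|_{L^{4,\infty}(B_r)}$, then Morrey's Dirichlet growth theorem---matches the paper. But the place you flag as a ``secondary technical point'' is precisely where your outline diverges from the paper and where it would fail as written.

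You propose to write the conjugated system as $\Delta^2(Pu)=\cdots$ and then recover $\nabla u$ from $Pu$; this forces you to control $(\nabla P)u$, hence to prove decay of $\nabla P$, and you suggest doing this by the coupled iteration for $(u,P)$ ``exactly as in \cite{Riviere-Struve-2008,Struwe-2008}''. That is exactly Struwe's approach in \cite{Struwe-2008}, and the paper's introduction explains why it does \emph{not} apply to the general system \eqref{eq: Lamm-Riviere system n=4}: closing the coupled iteration requires that $D,E,G,\Omega$ satisfy pointwise growth bounds in terms of $u$ and its derivatives (so that smallness of a Morrey norm of $\nabla u$ propagates to smallness of the coefficients on smaller balls). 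Under the bare regularity hypotheses \eqref{eq: coefficients D-E n=4}--\eqref{eq: coefficients Omega n=4} there is no such link between the coefficients and $u$, and absolute continuity alone gives smallness only on the \emph{initial} ball, not a power decay along the iteration. So the coupled scheme does not close here.

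The paper avoids this entirely by two choices you are missing. First, it rewrites the equation as $\Delta(P\Delta u)=\cdots$ (equation \eqref{eq: (35)}), not $\Delta^2(Pu)$; this keeps only $\nabla P,\nabla^2 P$ in play, and since $\|P\|_{W^{2,2}}\le C\epsilon$ is fixed once by the gauge lemma, every coefficient involving $P$ already carries a small factor $\epsilon$---no decay of $P$ is ever needed. Second, and crucially, it Hodge-decomposes $P\,du=df+\ast dg$ (not $Pu$) and uses that $P\in SO_m$, so $|\nabla u|=|P\nabla u|$ pointwise. Thus after estimating $\nabla\tilde f$ and $\ast dg$ in $L^{4,\infty}$ by $C\epsilon\|\nabla u\|_{L^{4,\infty}(B_1)}$ and the biharmonic remainder by $C\gamma\|\nabla u\|_{L^{4,\infty}(B_1)}$, one reads off the decay of $\|\nabla u\|_{L^{4,\infty}}$ directly, with no reference to decay of $P$. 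Replace your $\Delta^2(Pu)$ ansatz and the coupled iteration by this $P\,du$ Hodge decomposition, and the rest of your outline goes through.
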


\subsection{Gauge-equivalent form of \eqref{eq: Lamm-Riviere system n=4}}\label{subsec:Gauge transform}
Following \cite{Struwe-2008}, we shall use the theory of Gauge transform to deal with system \eqref{eq: Lamm-Riviere system n=4}. The original work on Gauge transform in Sobolev spaces was due to Uhlenbeck \cite{Uhlenbeck-1982} and the results were extended to suitable Morrey spaces by Meyer and Rivi\`ere \cite{Meyer-Riviere-2003}, and Tao and Tian \cite{Tao-Tian-2004} in the study of Yang-Mills fields. We will use the following Gauge transform in Sobolev spaces; see \cite[Lemma 3.3]{Struwe-2008} or \cite[Theorem A.5]{Lamm-Riviere-2008} for more general statements and proofs.

\begin{lemma}\label{lem: Gauge transform} There exist $\ep=\ep(m)>0$
and $C=C(m)>0$ with the following property: For every $\Om\in W^{1,2}(B,so_{m}\otimes\wedge^{1}\R^{4})$ $(B=B^4_{10})$
with
\[
\|\Om\|_{W^{1,2}(B)}\le\ep,
\]
there exist $P\in W^{2,2}(B,SO_{m})$ and $\xi\in W^{2,2}(B,so_{m}\otimes\wedge^{2}\R^{4})$
such that
\begin{eqnarray*}
P\D P^{-1}+P\Om P^{-1}=\ast\D\xi &  & \text{in }B,
\end{eqnarray*}
and
\begin{eqnarray*}
\D\ast\xi=0\quad\text{in }B, &  & \xi=0\quad\text{on }\pa B.
\end{eqnarray*}
 Moreover, we have
\[
\begin{aligned} & \|\na^{2}P\|_{L^{2}(B)}+\|\na P\|_{L^{4}(B)}+\|\na^{2}\xi\|_{L^{2}(B)}+\|\na\xi\|_{L^{4}(B)}\\
 & \quad\le C\left(\|\na\Om\|_{L^{2}(B)}+\|\Om\|_{L^{4}(B)}\right)\le C\ep.
\end{aligned}
\]
\end{lemma}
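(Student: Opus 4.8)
The plan is to recast the conclusion as a Coulomb--gauge problem for the connection $\Om$ and to solve it by Uhlenbeck's continuity method. First I would observe that for $P\in W^{2,2}(B,SO_m)$ the gauge transform $A^{P}:=P\,\D P^{-1}+P\Om P^{-1}=-(\D P)P^{-1}+P\Om P^{-1}$ is a $1$-form, and that requiring ``$A^{P}=\ast\D\xi$ with $\D\ast\xi=0$ in $B$ and $\xi=0$ on $\pa B$'' amounts to asking that $A^{P}$ be co-closed, $\D^{\ast}A^{P}=0$ in $B$, with vanishing normal component on $\pa B$: for such an $A^{P}$ the $3$-form $\ast A^{P}$ is closed on the contractible ball $B$ with vanishing tangential trace, so the $L^{p}$ Poincar\'e lemma with boundary data (standard Hodge theory / div--curl estimates on $B$) furnishes a \emph{unique} $2$-form $\xi$ with $\ast\D\xi=A^{P}$, $\D^{\ast}\xi=0$, $\xi|_{\pa B}=0$ and $\|\na\xi\|_{L^{4}(B)}+\|\na^{2}\xi\|_{L^{2}(B)}\le C(\|A^{P}\|_{L^{4}(B)}+\|\na A^{P}\|_{L^{2}(B)})$. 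Thus everything reduces to producing a Coulomb gauge $P$ with $\|\na^{2}P\|_{L^{2}(B)}+\|\na P\|_{L^{4}(B)}+\|A^{P}\|_{W^{1,2}(B)}\le C\ep$; $P$ is then recovered from $A^{P}$ via $\D P=P\Om-A^{P}P$, which yields $\|\na P\|_{L^{4}}\le C(\|\Om\|_{L^{4}}+\|A^{P}\|_{L^{4}})$ and, after one differentiation, $\|\na^{2}P\|_{L^{2}}\le C(\|\na P\|_{L^{4}}\|\Om\|_{L^{4}}+\|\na\Om\|_{L^{2}}+\|\na A^{P}\|_{L^{2}}+\|A^{P}\|_{L^{4}}\|\na P\|_{L^{4}})$, both $\lesssim\ep$ once the gauge bound is known (here I use the borderline embedding $W^{1,2}(B^{4})\hookrightarrow L^{4}(B^{4})$ for both $\Om$ and $A^{P}$).

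To construct the Coulomb gauge I would run the continuity method along $\Om_{t}:=t\Om$, $t\in[0,1]$: let $\mathcal{T}$ be the set of $t$ for which there exists $P_{t}\colon B\to SO_m$ with $\na P_{t}\in L^{4}$, $\na^{2}P_{t}\in L^{2}$, $\D^{\ast}A^{P_{t}}=0$, vanishing normal component of $A^{P_{t}}$ on $\pa B$, and $\|A^{P_{t}}\|_{W^{1,2}(B)}\le\rho_{0}$ for a threshold $\rho_{0}=\rho_{0}(m)$ fixed below. Then $0\in\mathcal{T}$ (take $P_{0}=\Id$). For openness I would perturb off a known solution: writing $P=P_{t}Q$ with $Q$ close to $\Id$, the Coulomb condition together with the boundary condition becomes a semilinear elliptic equation for $Q$ (equivalently for $\log Q$) whose leading part is the Hodge Laplacian on $so_m$-valued functions with the pertinent boundary conditions --- an isomorphism, the remaining terms being lower order with coefficients made from $\na P_{t}\in L^{4}$ --- so a fixed-point argument solves it for all $\Om_{s}$ with $s$ near $t$. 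Closedness is the crux, and for it I need the a priori estimate below.

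The hardest ingredient is the a priori estimate --- this is also exactly where the smallness $\ep=\ep(m)$ gets forced. Besides $\D^{\ast}A^{P}=0$, a Coulomb gauge satisfies the curvature identity $\D A^{P}+A^{P}\wedge A^{P}=P(\D\Om_{t}+\Om_{t}\wedge\Om_{t})P^{-1}$; with the vanishing normal trace on $\pa B$ this is an elliptic div--curl system for $A^{P}$, and since the relevant harmonic fields on $B$ vanish (the $4$-ball being contractible) the Gaffney/Hodge estimate on $B$ upgrades to
\[
\|A^{P}\|_{W^{1,2}(B)}\le C\|\D A^{P}\|_{L^{2}(B)}\le C\big(\|\na\Om\|_{L^{2}}+\|\Om\|_{L^{4}}^{2}+\|A^{P}\|_{L^{4}}^{2}\big),
\]
and $W^{1,2}(B^{4})\hookrightarrow L^{4}(B^{4})$ turns this into $\|A^{P}\|_{W^{1,2}}\le C_{1}(\|\na\Om\|_{L^{2}}+\|\Om\|_{W^{1,2}}^{2}+\|A^{P}\|_{W^{1,2}}^{2})$. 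Choosing first $\rho_{0}$ with $C_{1}\rho_{0}\le\tfrac12$ and then $\ep=\ep(m)$ so small that $4C_{1}\ep<\rho_{0}$, the quadratic term in $A^{P}$ is absorbed whenever $\|A^{P}\|_{W^{1,2}}\le\rho_{0}$, giving the \emph{strict} improvement $\|A^{P}\|_{W^{1,2}}\le 4C_{1}\ep<\rho_{0}$; hence $\mathcal{T}$ is closed (along $t_{j}\to t_{\infty}$ these bounds make $(P_{t_{j}})$ bounded, a weak limit solves the system for $\Om_{t_{\infty}}$ since the nonlinear terms pass to the limit by Rellich compactness, and lower semicontinuity returns the improved bound). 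Therefore $\mathcal{T}=[0,1]$, so $1\in\mathcal{T}$ gives $P$ and the first step gives $\xi$, with $\|A^{P}\|_{W^{1,2}}\le C\ep$ and hence all the asserted bounds, the final $\le C\ep$ being a restatement of $\|\na\Om\|_{L^{2}(B)}+\|\Om\|_{L^{4}(B)}\le C\ep$.

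The one genuinely delicate point is the closedness step, which uses the whole structure at once: the conformal dimension $n=4$, where $\na\Om\in L^{2}$ just embeds into $\Om\in L^{4}$ so that the curvature and the quadratic gauge term land in $L^{2}$; the triviality of the relevant harmonic fields on the ball, which removes the $\|A^{P}\|_{L^{2}}$ defect in the Gaffney estimate; and the smallness $\ep$, used to absorb $A^{P}\wedge A^{P}$ and $\Om\wedge\Om$. Everything else --- the Hodge-theoretic construction of $\xi$, the perturbation argument for openness, the bootstrap recovering $P$ --- is soft. (One could instead minimise $P\mapsto\int_{B}|P\,\D P^{-1}+P\Om P^{-1}|^{2}$ over the gauge group and analyse the Euler--Lagrange system, but checking the boundary condition and the regularity of the minimiser is of comparable difficulty.) The statement is of course Uhlenbeck's \cite{Uhlenbeck-1982}; in the precise form above it is \cite[Lemma 3.3]{Struwe-2008} and \cite[Theorem A.5]{Lamm-Riviere-2008}, to which we refer for full details.
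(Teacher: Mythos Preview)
The paper does not give its own proof of this lemma; it simply states the result and refers to \cite[Lemma 3.3]{Struwe-2008} and \cite[Theorem A.5]{Lamm-Riviere-2008} for the argument. Your sketch via Uhlenbeck's continuity method---reducing to a Coulomb gauge, running the homotopy $\Om_t=t\Om$, and closing the a priori estimate by absorbing the quadratic term $A^P\wedge A^P$ through smallness and the critical embedding $W^{1,2}(B^4)\hookrightarrow L^4(B^4)$---is exactly the standard approach used in those references, and you correctly identify them at the end. So your proposal is consistent with, and in fact more detailed than, what the paper itself provides.
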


Let $P,\xi$ be given by Lemma \ref{lem: Gauge transform}. By the computation in \cite{Struwe-2008} (see in particular \cite[Equation (35)]{Struwe-2008}), the gauge-equivalent form of \eqref{eq: Lamm-Riviere system n=4} is given by
\begin{equation}\label{eq: (35)}
\De(P\De u)={\rm div}^{2}(D_{P}\otimes\na u)+{\rm div}(E_{P}\cdot\na u)+G_{P}\cdot\na u+\ast\D\De\xi\cdot P\na u,
\end{equation}
where
\begin{equation}
(D_{P})_{\al}^{ik}=\de_{\al\be}P^{ij}D_{\be}^{jk}+2\pa_{\al}P^{ik},\label{eq: DP}
\end{equation}
\begin{equation}
(E_{P})_{\al\be}^{ik}=P^{ij}E_{\al\be}^{jk}-2\pa_{\al}P^{ij}D_{\be}^{jk}-\de_{\al\be}\De P^{ik}-2\pa_{\al\be}P^{ik}\label{eq: EP}
\end{equation}
and
\begin{equation}
\begin{aligned}G_{P} & =\na\De P+P\De\Om-(\De\ast\D\xi)P+\De PD+PG-\na P\cdot E\\
& =-2\na P\cdot\na\Om-\De P\Om+2\na(\ast\D\xi)\cdot\na P+\ast\D\xi\De P+\De PD+PG-\na P\cdot E,
\end{aligned}
\label{eq: GP}
\end{equation}
where the second line is obtained by taking Laplacian on the Gauge
decomposition. The second line is meaningful since it concerns at
most second order derivatives.

Note that the Lorentz-Sobolev embedding implies that we have the following improved regularity:
\[
\na u,\na P,\na\xi,\Om\in L^{4,2}.
\]
From the expressions of $D_P$, $E_P$ and $G_P$, we easily deduce the following estimates:
\begin{itemize}
	\item[i)] $|D_{P}|\le|D|+|\na P|$;
	\item[ii)] $|E_{P}|\lesssim|E|+|\na P||D|+|\na^{2}P|$;
	\item[iii)] $|G_{P}|\lesssim|\na P||\na\Om|+|\na^{2}P||\Om|+|\na^{2}\xi||\na P|+|\na\xi||\na^{2}P|+|\na^{2}P||D|+|G|+|\na P||E|$.
\end{itemize}

a) Since $D\in W^{1,2}$, by the Lorentz-Sobolev embedding,
$
D\in L^{4,2}.
$
This in turn implies that
\[
D_{P}\in L^{4,2}(B_{10}^4).
\]

b) Since $E\in L^{2}$, ii) implies that
\[
E_{P}\in L^{2}.
\]
%(no improvement here)

c)
%(This can be satisfied in the case of biharmonic mappings using the
%growth condition of $G$ and the inequality (\ref{eq: Multiplicity rule})).
Applying Proposition \ref{prop: Lorentz-Holder inequality} repeatedly for each term in iii) and note that $G\in L^{\frac{4}{3},1}$, we easily conclude
\[
G_{P}\in L^{\frac{4}{3},1}(B_{10}^4).
\]

\subsection{Proof of Theorem \ref{thm:main thm}}

Now we can prove Theorem \ref{thm:main thm}. From the above subsections we deduce  that
\[
\na u,\na P,\na\xi,\Om\in L^{4,2},
\]
\[
D_{P}\in L^{4,2},\quad  E_{P}\in L^{2}=L^{2,2}\quad \text{and}\quad   G_{P}\in L^{\frac{4}{3},1}.
\]
Let $\epsilon=\epsilon(m)$ be given as in Lemma \ref{lem: Gauge transform} and we may additionally assume that
$$\|u\|_{W^{2,2}}+\|D\|_{W^{1,2}}+\|E\|_{L^2}+\|G\|_{L^{\frac{4}{3},1}}+\|\Omega\|_{W^{1,2}}<\epsilon.$$
All the previous coefficients functions are defined on the ball $B^4_{10}$, with norms bounded by $C\ep$. We first extend them from $B^4_{5}$  to $\R^{4}$ such that their respective (Lorentz-Sobolev) norms in $\R^4$ are no more than a bounded constant multiplying the corresponding norm in $B^4_{5}$.  For simplicity of notations, we keep using the same notation for the extended coefficient functions.

%\medskip
\textbf{Step 1.} Decompose $P\nabla u$ via Hodge decomposition.
%\medskip

By the Hodge decomposition (see e.g. \cite[Page 434]{Wang-2004-CPAM}), there are a function $f$ and a co-closed $2$-form $g$ such that $\nabla f,\nabla g\in L^{4}(\R^4)$ and
\[
P du=df+\ast dg
\]
in $\R^{4}$. It follows that $f$ satisfies
\[
\De^{2}f=\De{\rm div}(P\na u)=\De(P\De u)+\De(\na P\cdot\na u),
\]
or equivalently,
\[
\De^{2}f={\rm div}^{2}(D_{P}\otimes\na u)+{\rm div}(E_{P}\cdot\na u)+G_{P}\cdot\na u+\ast\D\De\xi\cdot P\na u+\De(\na P\cdot\na u).
\]
Similarly, $g$ satisfies
\[
\De^{2}g=\Delta\big(\ast (dP\wedge\D u)\big).
\]
%\medskip

\textbf{Step 2.} Separate the biharmonic part of $f$.
\medskip

Let
$
I_{4}=c\log (|x|)
$
be the fundamental solution of $\De^{2}$ in $\R^{4}$ and set
\[
\begin{aligned}
\tilde{f}&=I_{4}\ast\left({\rm div}^{2}(D_{P}\otimes\na u)+{\rm div}(E_{P}\cdot\na u)+G_{P}\cdot\na u+\ast\D\De\xi\cdot P\na u+\De(\na P\cdot\na u)\right)\\
&=:I_4\ast K.
\end{aligned}
\]
It is clear that
\[
\De^{2}(f-\tilde{f})=0.
\]

%The key is to estimate $\tilde{f}$.
\medskip
\textbf{Step 3.} Estimate $\|\D\tilde{f}\|_{L^{4,\infty}(\R^{4})}$.
\medskip

Recall that $I_{\al}(x)=|x|^{\al-4}$, $0<\al<4$ are the fractional Riesz potentials.

(1) Set $J_{1}=I_{4}\ast{\rm div}^{2}(D_{P}\otimes\na u)$. Integration by parts implies
\[
|\nabla J_{1}|\lesssim|I_{1}(|D_{P}||\na u|)|.
\]
Since $D_{P}\in L^{4,2}$ and $\na u\in L^{4,2}\subset L^{4,\wq}$,
we may apply Proposition \ref{prop: Lorentz-Holder inequality}
%(\ref{eq: Multiplicity rule}) (with $p_{1}=p_{2}=4$ and $q_{1}=2,q_{2}=\wq$, then $p=q=2$)
to deduce
\[
|D_{P}||\na u|\in L^{2}.
\]
Applying Proposition \ref{prop:Riesz potential} with $p=2$, $\al=1$ and $n=4$, we infer that $\na J_{1}\in L^{4}$.
Furthermore, we have
\begin{equation}
\begin{aligned}
\|\na J_{1}\|_{L^{4,\infty}}&\le\|\na J_{1}\|_{L^{4}}\lesssim \|I_1(|D_P||\nabla u|)\|_{L^{4}}\stackrel{\text{Prop }\ref{prop:Riesz potential}}{\lesssim} \||D_P||\nabla u|\|_{L^{2}}\\
&\stackrel{\text{Prop }\ref{prop: Lorentz-Holder inequality}}{\leq} \|D_{P}\|_{L^{4,2}}\|\nabla u\|_{L^{4,\infty}(\R^{4})}\lesssim\ep\|\nabla u\|_{L^{4,\infty}(B_{1})}.
\end{aligned}
\label{eq:Estimate of J1}
\end{equation}

(2) Set $J_{2}=I_{4}\ast{\rm div}(E_{P}\cdot\na u)$. Then
\[
|\na J_{2}|\lesssim I_{2}(|E_{P}||\na u|).
\]
Since $E_{P}\in L^{2}$ and $\na u\in L^{4,2}$, we apply Proposition \ref{prop: Lorentz-Holder inequality} (with $p_{1}=2,p_{2}=4$ and
$q_{1}=2,q_{2}=\wq$, then $p=4/3,q=2$) to get
\[
E_{P}\cdot\na u\in L^{\frac{4}{3},2}.
\]

Since $I_2\colon L^{\frac{4}{3},2}\to L^{4,\infty}$ is bounded by Proposition \ref{prop:Riesz potential}, we estimate as follows:
\begin{equation}
\begin{aligned}
\|\na J_{2}\|_{L^{4,\infty}}&\lesssim \|I_{2}(|E_{P}||\na u|)\|_{L^{4,\wq}}\lesssim\|E_{P}\cdot\na u\|_{L^{\frac{4}{3},2}}\\
&\stackrel{\text{Prop }\ref{prop: Lorentz-Holder inequality}}{\le}\|E_{P}\|_{L^{2}}\|\nabla u\|_{L^{4,\infty}(\R^{4})}\lesssim\ep\|\nabla u\|_{L^{4,\infty}(B_{1})}.
\end{aligned}
\label{eq: estimate of j2}
\end{equation}

(3) Set $J_{3}=I_{4}\ast(G_{P}\cdot\na u)$. Then,
\[
|\na J_{3}|\le I_{3}(|G_{P}\cdot\na u|).
\]
Since $G_{P}\in L^{\frac{4}{3},1}$ and $\na u\in L^{4,\wq}$,
we know from Proposition \ref{prop: Lorentz-Holder inequality} that
\[
G_{P}\cdot\na u\in L^{1}.
\]
Thus, we may apply Proposition \ref{prop:Riesz potential} with $n=4$ and $\al=3$ to infer that $\na J_{3}\in L^{4,\infty}$
and
\begin{equation}
\begin{aligned}
\|\na J_{3}\|_{L^{4,\infty}}&\lesssim \|I_{3}(|G_{P}\cdot\na u|) \|_{L^{4,\infty}} \lesssim \|G_{P}\cdot\na u\|_{L^{1}} \\
&\stackrel{\text{Prop }\ref{prop: Lorentz-Holder inequality}}{\leq }\|G_{P}\|_{L^{4/3,1}}\|\nabla u\|_{L^{4,\infty}(\R^{4})}\lesssim\ep\|\nabla u\|_{L^{4,\infty}(B_{1})}.
\end{aligned}
\label{eq: Estimate of J3}
\end{equation}

(4) Set $J_{4}=I_{4}\ast\left(\ast\D\De\xi\cdot P\na u\right)$. Up to a sign, we have
\[
J_{4}=\na I_{4}\ast\left(\De\xi\wedge P\D u\right)+I_{4}\left(\De\xi\wedge\D P\wedge\D u\right).
\]
It follows
\[
|\na J_{4}|\lesssim I_{2}(|\na^{2}\xi||\na u|)+I_{3}(|\na^{2}\xi||\na u||\na P|)=:J_{41}+J_{42}.
\]
Since $\na^{2}\xi\in L^{2}$ and $\nabla u\in L^{4,\infty}$, we know $|\na^{2}\xi||\na u|\in L^{\frac{4}{3},2}$.
Since $I_2\colon L^{\frac{4}{3},2}\to L^{4,\infty}$ is bounded, we estimate as follows:
\begin{equation*}
\begin{aligned}
\|\na J_{41}\|_{L^{4,\infty}}&\lesssim \|I_{2}(|\nabla^2 \xi||\na u|)\|_{L^{4,\wq}}\lesssim\||\nabla^2 \xi||\na u|\|_{L^{\frac{4}{3},2}}\\
&\stackrel{\text{Prop }\ref{prop: Lorentz-Holder inequality}}{\le}\|\nabla^2 \xi\|_{L^{2}}\|\nabla u\|_{L_{\ast}^{4}(\R^{4})}\lesssim\ep\|\nabla u\|_{L^{4,\infty}(B_{1})}.
\end{aligned}
%\label{eq: estimate of j41}
\end{equation*}
%\[
%\|J_{41}\|_{M_{\ast}^{4,n-4}}\le\|J_{41}\|_{M^{4,n-4}}\lesssim\ep\|\nabla u\|_{M_{\ast}^{4,n-4}(B_{1})}.
%\]
Similarly, we have $|\na^{2}\xi||\na u||\na P|\in L^{1}$, and we may estimate as in \eqref{eq: Estimate of J3} to deduce
\[
\|J_{42}\|_{L^{4,\infty}}\lesssim\ep\|\nabla u\|_{L^{4,\infty}(B_{1})}.
\]
Combining the above two estimates yields
\begin{equation}
\|\na J_{4}\|_{L^{4,\infty}}\lesssim\ep\|\nabla u\|_{L^{4,\infty}(B_{1})}.\label{eq: Estimate of J4}
\end{equation}

(5) Set $J_{5}=I_{4}\ast(\De(\na P\cdot\na u))$. Then,
\[
|\na J_{5}|\le I_{1}(|\na u||\na P|).
\]
We may estimate as in \eqref{eq:Estimate of J1} to derive
\begin{equation}
\|\na J_{5}\|_{L^{4,\infty}}\lesssim\ep\|\nabla u\|_{L^{4,\infty}(B_{1})}\label{eq: Estimate of J5}
\end{equation}
Combining \eqref{eq:Estimate of J1}, \eqref{eq: estimate of j2},
\eqref{eq: Estimate of J3}, \eqref{eq: Estimate of J4} and \eqref{eq: Estimate of J5} all together, we finally obtain
\begin{equation}
\|\na\tilde{f}\|_{L^{4,\infty}(\R^{4})}\lesssim\ep\|\nabla u\|_{L^{4,\infty}(B_{1})}.\label{eq: error estimate 3}
\end{equation}

\textbf{Step 4.} Estimate $\|\ast dg\|_{L^{4,\infty}}$.

Note that $\De^{2}g=\De\left(\ast \big(dP\wedge du \big)\right)$. Thus, $g=I_{4}\ast \Big(\De\Big(\ast \big(dP\wedge du \big)\Big)\Big)$. This implies
\[
|dg|\lesssim I_{1}(|\na u||\na P|).
\]
Hence, we may estimate as in \eqref{eq: Estimate of J5} to deduce
\begin{equation}
\|*dg\|_{L^{4,\infty}}\lesssim\ep\|\nabla u\|_{L^{4,\infty}(B_{1})}\label{eq: Estimate of g}
\end{equation}

\textbf{Step 5.} Completion of the proof.

Since $P\D u=\D f+\ast dg=\D\tilde{f}+\ast dg+h$, where $h$ is a biharmonic 1-form, we have, for any $\ga\in(0,1)$,
\[
\begin{aligned}\|\nabla u\|_{L^{4,\infty}(B_{\ga})} & \le\|h\|_{L^{4,\infty}(B_{\ga})}+\|\nabla\tilde{f}\|_{L^{4,\infty}(B_{\ga})}+\|\ast dg\|_{L^{4,\infty}(B_{\ga})}\\
 & \lesssim\ga\|h\|_{L^{4,\infty}(B_{1})}+\|\nabla\tilde{f}\|_{L^{4,\infty}(B_{1})}+\|\ast dg\|_{L^{4,\infty}(B_{1})}\\
 & \lesssim\ga\|\na u\|_{L^{4,\infty}(B_{1})}+\ep\|\na u\|_{L^{4,\infty}(B_{1})},
\end{aligned}
\]
where we have used \eqref{eq: error estimate 3} and \eqref{eq: Estimate of g} in the last line. In conclusion, we have obtained
\begin{equation}
\|\nabla u\|_{L^{4,\infty}(B_{\ga})}\le C\left(\ga+\ep\right)\|\nabla u\|_{L^{4,\infty}(B_{1})}.\label{eq: Decay estimate 2}
\end{equation}
We thus conclude, by choosing $\ga+\ep$ sufficiently small and proceeding a standard iteration, that there exist $\alpha_0\in (0,1)$ and $r_0<\frac{1}{2}$ such that  for all $r<r_0$,
$$\|\nabla u\|_{L^{4,\infty}(B_r)}\leq Cr^{\alpha}.$$
Note that by \eqref{eq: Lorentz-to-Morrey} we have for any $p\in (1,4)$,
$$\big(r^{p-4}\int_{B_r}|\nabla u|^pdx\big)^{\frac{1}{p}}\leq \|\nabla u\|_{L^{4,\infty}(B_r)}.$$
Combining the above two estimates with Morrey's Dirichlet growth theorem implies the H\"older continuity of $u$.
%The proof is complete by choosing $\ga+\ep$ sufficiently small.

\section{Higher order elliptic systems}\label{sec: higher oder system}

In this section, we shall prove our main result, Theorem \ref{thm:general case}. The approach is basically the same  as  that  of Section \ref{sec: Proof of main results}.  Throughout this section, we assume $k\ge 2$, and we use $B_r(x)$ to denote a ball  in $\R^{2k}$ centered at $x$  with radius $r$. Our aim is to  prove the following decay estimates in Lorentz spaces.

\begin{lemma}\label{lem: decay of Long-Gastel} Let $u\in W^{k,2}(B_{1}(0),\R^{m})$
be a solution to system (\ref{eq: Longue-Gastel system}). There exists
$\tau \in(0,1)$, depending only on $k$ and $m$, satisfying the following
property: for any $x\in B_{1/2}(0)$ and any $0<r<1/2$, there holds
\begin{equation}
\sum_{j=1}^{k}\|\na^{j}u\|_{L^{2k/j,\wq}(B_{\tau r}(x))}\le\frac 12 \sum_{j=1}^{k}\|\na^{j}u\|_{L^{2k/j,\wq}(B_{r}(x))}.\label{eq: decay estimates of higher order system}
\end{equation}
As a consequence, there exist $\al\in(0,1)$ and $C>0$, depending only
on $k$ and $m$, such that
\[
\sum_{j=1}^{k}\|\na^{j}u\|_{L^{2k/j,\wq}(B_{r}(x))}\le C\|u\|_{W^{k,2}(B_{1})}r^{\al}.
\]
 for all $x\in B_{1/2}$ and any $0<r<1/2$.\end{lemma}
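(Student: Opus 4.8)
The plan is to mimic the fourth-order warm-up of Section \ref{sec: Proof of main results}, but carry it out for the $2k$-th order system \eqref{eq: Longue-Gastel system}. The strategy is: (i) localize and normalize so that all relevant coefficient norms are smaller than the $\varepsilon = \varepsilon(k,m)$ coming from a higher-order analogue of the Gauge transform of Lemma \ref{lem: Gauge transform} (applied to $\eta$, which is the antisymmetric part $V_0 = d\eta + F$); (ii) apply that Gauge transform to rewrite \eqref{eq: Longue-Gastel system} in a Gauge-equivalent form $\Delta^k(P\Delta^{k-1}u) = \sum_i \operatorname{div}^i(\cdots \otimes \nabla u) + (\text{lower order}) + (\ast d\Delta^{k-1}\xi)\cdot P\nabla u$, where all new coefficients are controlled in the appropriate Lorentz-Sobolev spaces via the product estimate \eqref{eq:product norm for Lorentz-Sobolev} and the Sobolev embeddings for Lorentz-Sobolev functions of negative order; (iii) extend everything to $\R^{2k}$ using the extension operator $E$; (iv) Hodge-decompose $P\nabla u = df + \ast dg$, so that $\Delta^k f$ and $\Delta^k g$ are given by divergence-type right-hand sides, subtract off the polyharmonic part using the fundamental solution $I_{2k}$ of $\Delta^k$, and estimate the potential term $\tilde f = I_{2k} \ast K$ in $L^{2k,\infty}$; (v) conclude a decay inequality $\sum_j \|\nabla^j u\|_{L^{2k/j,\infty}(B_{\tau r})} \le C(\tau + \varepsilon)\sum_j \|\nabla^j u\|_{L^{2k/j,\infty}(B_r)}$ and iterate to get \eqref{eq: decay estimates of higher order system}; the stated consequence then follows by a standard iteration converting the decay into a power bound $C\|u\|_{W^{k,2}}r^\alpha$.

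More concretely, for the core estimate in step (iv): each term on the right-hand side of the Gauge-equivalent system has the schematic form $\Delta^i \langle A_i, du\rangle$ or $\Delta^i \delta(B_i du)$ with $A_i, B_i$ in Lorentz-Sobolev spaces of the order dictated by \eqref{eq:Assumption 1}--\eqref{eq:Assumption 2} (refined by one integrability index to a Lorentz space by the Lorentz-Sobolev embedding, exactly as $D \in W^{1,2} \Rightarrow D \in L^{4,2}$ in the warm-up). Convolving with $I_{2k}$ and integrating by parts, $\nabla(I_{2k} \ast \Delta^i\langle A_i, du\rangle)$ becomes (up to lower-order $\nabla P$-terms produced by moving derivatives off $P$) a Riesz potential $I_{2k-2i-1}(|A_i|\,|\nabla u|)$, and Hölder in Lorentz spaces (Proposition \ref{prop: Lorentz-Holder inequality}) places $|A_i|\,|\nabla u|$ in the Lorentz space $L^{\frac{2k}{2k-2i-1}, s}$ (or $L^1$ in the borderline antisymmetric/$F$ term, exactly as the $G_P$ and $J_3$ terms in the warm-up), whence Proposition \ref{prop:Riesz potential} gives a bound in $L^{2k,\infty}$ by $\varepsilon \|\nabla u\|_{L^{2k,\infty}(B_1)}$. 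The terms $\Delta^i \delta(B_i du)$ and the genuinely new feature — that here one must also track the higher derivatives $\nabla^2 u, \dots, \nabla^k u$ simultaneously, which is why the statement is a sum over $j$ — are handled by differentiating the Hodge decomposition / the representation formula further and repeating the same scheme, using $\nabla^j u \in L^{2k/j,\infty}$ on the left and distributing derivatives among the factors using the Leibniz rule; the negative-order generalized Hölder inequality \eqref{eq:product norm for Lorentz-Sobolev} is what keeps the bookkeeping consistent when $A_i$ itself has negative smoothness.

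The main obstacle I anticipate is bookkeeping the exponents: one must check that for every $i \in \{0,\dots,k-1\}$ (resp. $\{0,\dots,k-2\}$) and for every choice of how derivatives are distributed over the product of a coefficient function and a derivative of $u$, the Hölder-in-Lorentz product lands in precisely the space on which $I_{2k-2i-1}$ (resp. the relevant Riesz operator) maps boundedly into $L^{2k,\infty}$ — i.e. that the scaling $\frac{1}{p_1} + \frac{1}{p_2} = \frac{2k-\alpha}{2k}$ works out in all cases, and that the second (fine) Lorentz index is $\le 1$ exactly when the first index is $1$ (the $F$ and borderline terms) so that Proposition \ref{prop:Riesz potential} with $q'=\infty$ applies. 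This is the step where the precise regularity assumptions \eqref{eq:Assumption 1}--\eqref{eq:Assumption 2} are calibrated, and it is essentially a (lengthy but mechanical) verification that each term is "critical" with room for an $\varepsilon$ gain; the antisymmetry of $\eta$ enters only through the Gauge transform producing the extra derivative on $P$ that turns the otherwise-only-$L^1$ top-order term into something with a gain. A secondary point requiring care is establishing the higher-order Gauge transform itself (the analogue of Lemma \ref{lem: Gauge transform} in $W^{k-1,2}$ or the appropriate Lorentz-Sobolev class); I would either cite the construction of de Longueville and Gastel \cite{deLongueville-Gastel-2019} or adapt the Uhlenbeck/Rivière argument, since only its quantitative smallness estimate is needed downstream.
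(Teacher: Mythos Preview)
Your overall strategy (gauge transform $\to$ Riesz potential estimates $\to$ decay $\to$ iterate) is the paper's strategy, but your execution diverges from the paper in several places, and one of those divergences is a genuine error.

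First, the gauge-equivalent form you write, $\Delta^k(P\Delta^{k-1}u)=\cdots$, is not well-formed: that expression carries $4k-2$ derivatives of $u$, not $2k$. The paper instead derives an equation for $\Delta^{k-1}(P\Delta u)$; the pivot quantity is $P\Delta u$, not $P\nabla u$ or $P\Delta^{k-1}u$. Concretely, the paper shows
\[
\Delta^{k-1}(P\Delta u)=J+\sum_{a=0}^{k-1}\nabla^{a}\Big(\sum_{i=1}^{k}K_{a,i}\nabla^{i}u\Big)
\]
with $K_{a,i}\in L^{\frac{2k}{2k-a-i},1}$, obtained by a careful case analysis ($2l\le k-1$ versus $2l>k-1$) of how to move $P$ through $\Delta^l\langle V_l,du\rangle$ and $\Delta^l\delta(w_l du)$.

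Second, the paper does \emph{not} Hodge-decompose $P\nabla u$ in the general case (that was only in the $k=2$ warm-up). Instead it writes directly $P\Delta u=f+g$ with $f=I_{2(k-1)}[\text{RHS}]$ and $g$ polyharmonic of order $k-1$, and estimates $f$ and its derivatives in $L^{2k/(j+2),\infty}$. So the fundamental solution used is $I_{2(k-1)}$, not $I_{2k}$.

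Third, and this is the step most at risk in your outline: having controlled $\|\Delta u\|_{L^{k,\infty}(B_r)}$, the paper needs $\|\nabla u\|_{L^{2k,\infty}(B_r)}$. It is \emph{not} obtained by ``differentiating the representation formula''; the paper uses Struwe's harmonic splitting trick: write $u=u_0+u_1$ on $B_r$ with $u_0$ harmonic and $u_1|_{\partial B_r}=0$, use interior estimates for $u_0$ and $L^p$-theory for $u_1$ to get $\|\nabla u_1\|_{L^{2k,\infty}}\lesssim\|\Delta u\|_{L^{k,\infty}}$. Only after this does one differentiate $P\Delta u=f+g$ to climb to $\nabla^3 u,\ldots,\nabla^k u$. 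Your plan to get the full $\sum_j$ decay by differentiating the Hodge decomposition is vague precisely at this point, and if you try to push the $k=2$ Hodge route verbatim you will find yourself needing a substitute for this step anyway.
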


Once this lemma is proved, the main result follows easily.

\begin{proof}[Proof of Theorem \ref{thm:general case}] By the inequality (\ref{eq: Lorentz-to-Morrey}) and the above lemma, we obtain,
for any $1\le p<2k$, that
\[
r^{p-2k}\int_{B_{r}(x)}|\na u|^{p}\le C\|\na u\|_{L^{2k,\wq}(B_{r}(x))}^{p}\le C\|u\|_{W^{k,2}(B_{1})}^{p}r^{p\al}
\]
for all $x\in B_{1/2}$ and any $0<r<1/2$. Then Morrey's Dirichlet
growth theorem implies that $u\in C^{0,\al}(B_{1/2})$. The proof
is complete. \end{proof}

In the following sections, we shall prove Lemma \ref{lem: decay of Long-Gastel}.
Firstly, we use Uhlenbeck's gauge transformation to rewrite \eqref{eq: Longue-Gastel system} into a gauge-equivalent form. Then we apply  Riesz potential theory to estimate the singular part of the given solution. Finally we prove Lemma \ref{lem: decay of Long-Gastel} by the similar techniques as in Section \ref{sec: Proof of main results}.

\subsection{Gauge-equivalent form of \eqref{eq: Longue-Gastel system}}
We shall need the following Uhlenbeck's gauge theorem from \cite[Theorem 2.4]{deLongueville-Gastel-2019}.
\begin{lemma}\label{lem: Gauge transform higher} Let $k,m\ge 2$.  There exist $\ep=\ep(k,m)>0$
and $C=C(k,m)>0$ satisfying the following property: For every $\Om\in W^{k-1,2}(B_r,so_{m}\otimes\wedge^{1}\R^{2k})$
with
\[
\|\Om\|_{W^{k-1,2}(B_r)}\le\ep,
\]
there exist $P\in W^{k,2}(B_{r/2},SO_{m})$ and $\xi\in W^{k,2}(B_{r/2},so_{m}\otimes\wedge^{2}\R^{2k})$
such that
\begin{eqnarray*}
P\D P^{-1}+P\Om P^{-1}=\ast\D\xi &  & \text{in }B_{r/2}.
\end{eqnarray*}
%and
%\begin{eqnarray*}
%\D\ast\xi=0\quad\text{in }B, &  & \xi=0\quad\text{on }\pa B.
%\end{eqnarray*}
Moreover,
\[
\begin{aligned} & \|\D P\|_{W^{k-1,2}(B_{r/2})}+\|\delta \xi\|_{W^{k-1,2}(B_{r/2})}\le C\|\Omega\|_{W^{k-1,2}(B_{r})}\le C\ep.
\end{aligned}
\]
\end{lemma}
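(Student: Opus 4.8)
The statement is the higher-order analogue of Uhlenbeck's Coulomb gauge theorem, and the plan is to prove it, for small data, by a contraction argument around the trivial gauge $P=\Id$. \emph{Normalization.} In dimension $n=2k$ the critical quantity $\|\na^{k-1}\Om\|_{L^{2}}$ is invariant under $\Om\mapsto\widetilde\Om$ with $\widetilde\Om(y)=r\,\Om(ry)$, while the lower-order pieces of $\|\Om\|_{W^{k-1,2}}$ transform with favourable powers of $r$; so it suffices to treat $r=1$ and rescale at the end. It is convenient to first extend $\Om$ from $B_{1}$ to all of $\R^{2k}$ by the Lorentz--Sobolev extension operator recalled in Section~\ref{sec:relevant function spaces}, with control of the $W^{k-1,2}$ norm, and then to work on $\R^{2k}$ with the Newtonian potential, so that no boundary conditions intervene; the shrinkage $B_{r}\rightsquigarrow B_{r/2}$ in the statement is a harmless artifact of this extension and of the rescaling bookkeeping. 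All constants below depend only on $k,m$.

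\emph{Reduction to a semilinear elliptic equation.} Since $P\D P^{-1}=-\D P\,P^{-1}$, the gauge identity says exactly that the transformed connection $\widetilde\Om_{P}:=P\Om P^{-1}-\D P\,P^{-1}$ equals $\de\xi$ for some $so_{m}$-valued $2$-form $\xi$ (its co-exact Hodge component), which holds as soon as $\widetilde\Om_{P}$ satisfies the Coulomb condition $\de\widetilde\Om_{P}=0$; then $\xi$ is recovered by inverting $\de$ on co-exact forms (equivalently, by solving $\Delta\xi=\D\widetilde\Om_{P}$ with $\D\xi=0$), which also yields the asserted bound on $\de\xi$. Writing $P=e^{B}$ with $B$ valued in the Lie algebra $so_{m}$ — which forces $P\in SO_{m}$, hence the cost-free bounds $|P|\equiv\sqrt m$ and $|P-\Id|\le 2$, and keeps the whole iteration $so_{m}$-valued — and expanding
\[
\D P\,P^{-1}=\D B+\mathcal Q_{1}(B,\D B),\qquad P\Om P^{-1}=\Om+\mathcal Q_{2}(B,\Om),
\]
where $\mathcal Q_{1},\mathcal Q_{2}$ are absolutely convergent series each of whose monomials is at least quadratic and carries a factor $B$ (equivalently $P-\Id$), the Coulomb condition becomes the semilinear equation
\[
\Delta B=\de\Om+\de\,\mathcal N(B,\Om),\qquad \mathcal N:=\mathcal Q_{2}(B,\Om)-\mathcal Q_{1}(B,\D B),
\]
on $\R^{2k}$, to be solved in $W^{k,2}$.

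\emph{Fixed point and estimates.} Define $\Phi(B):=\Delta^{-1}\big(\de\Om+\de\,\mathcal N(B,\Om)\big)$ on the ball $\{\|B\|_{W^{k,2}}\le\de\}$ with $\de\sim\ep$. Since $\de$ loses one derivative and $\Delta^{-1}$ gains two, the linear term obeys $\|\Delta^{-1}\de\Om\|_{W^{k,2}}\lesssim\|\Om\|_{W^{k-1,2}}\le\ep$. For $\mathcal N$ one must check that every monomial lies in $W^{k-1,2}$ with norm $\lesssim\de(\de+\ep)$: these are all products of a derivative of $P-\Id$ with a derivative of $\Om$ or of $P$, and the only genuinely critical ones, of the types $\na^{k-1}(P-\Id)\cdot\Om$ and $\na^{k-1}(P-\Id)\cdot\na P$, are handled by $\na^{k-1}(P-\Id)\in W^{1,2}\hookrightarrow L^{2k/(k-1)}$ together with $\Om,\na P\in W^{k-1,2}\hookrightarrow L^{2k}$ in dimension $2k$; monomials in which $P-\Id$ appears undifferentiated use its uniform $L^{\infty}$ bound; the remaining ones match Sobolev exponents and are handled identically. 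These are precisely the Lorentz--Sobolev multiplication inequalities of Section~\ref{sec:relevant function spaces}, and the smallness is transferred by $\|e^{B}-\Id\|_{W^{k,2}}\lesssim\|B\|_{W^{k,2}}$. Choosing $\ep$ (hence $\de$) small, $\Phi$ is a contraction on that ball; its fixed point gives $B\in W^{k,2}(\R^{2k},so_{m})$ with $\|B\|_{W^{k,2}}\lesssim\|\Om\|_{W^{k-1,2}}$, hence $P=e^{B}\in W^{k,2}(SO_{m})$ and then $\xi$ as above, so that $P\D P^{-1}+P\Om P^{-1}=\ast\D\xi$ holds on $B_{1}$ with $\|\D P\|_{W^{k-1,2}}+\|\de\xi\|_{W^{k-1,2}}\lesssim\|\Om\|_{W^{k-1,2}}$. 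Undoing the normalization yields the statement for general $r$ on $B_{r/2}$.

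\emph{Main obstacle.} The heart of the matter is the nonlinear estimate: $W^{k,2}(\R^{2k})$ is the borderline Sobolev space ($kp=n$) and is \emph{not} an algebra, so one must verify term by term, with the sharp Lorentz-refined embeddings, that the expansions of $P\D P^{-1}$ and $P\Om P^{-1}$ land in $W^{k-1,2}$ — so that two applications of $\Delta^{-1}$ return to $W^{k,2}$ — with a constant that the smallness of $\ep$ can absorb; the decisive structural input, exactly as in the proofs of Uhlenbeck and Rivi\`ere, is that $P$ is $SO_{m}$-valued, which supplies the free $L^{\infty}$ bound taming the one genuinely critical product. Everything else — the Hodge-theoretic recovery of $\xi$ with $\D\xi=0$, and the extension/rescaling bookkeeping responsible for the radius $B_{r/2}$ — is routine.
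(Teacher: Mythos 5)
First, a remark on provenance: the paper does not prove this lemma at all --- it quotes it verbatim from de Longueville--Gastel \cite[Theorem 2.4]{deLongueville-Gastel-2019}, whose proof (like Uhlenbeck's original one and \cite[Theorem A.5]{Lamm-Riviere-2008}) runs by the method of continuity, not by a Banach fixed point. This matters, because your contraction argument has a genuine gap exactly where the continuity method is designed to succeed.

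The gap is in the nonlinear estimate. You need $\|\mathcal N(B,\Om)\|_{W^{k-1,2}}\lesssim \de(\de+\ep)$, i.e.\ genuinely superlinear smallness, in order for $\Phi$ to map the ball $\{\|B\|_{W^{k,2}}\le\de\}$ into itself and contract. But when you apply $\na^{k-1}$ to a term such as $(P-\Id)\,dB$ or $(P-\Id)\,\Om$ (equivalently, to $\int_0^1(e^{sB}\,dB\,e^{-sB}-dB)\,ds$), the Leibniz rule produces the term in which \emph{all} derivatives fall on the second factor: $(P-\Id)\na^{k}B$ and $(P-\Id)\na^{k-1}\Om$. These are precisely the terms you propose to handle "by the uniform $L^\infty$ bound" on $P-\Id$. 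That bound is $|P-\Id|\le 2$; it is uniform but not small, and it cannot be made small, because $W^{k,2}(\R^{2k})$ is exactly critical and does not embed into $L^\infty$, so the smallness of $\|B\|_{W^{k,2}}$ gives no smallness of $\|P-\Id\|_{L^\infty}$ (only a BMO bound). Consequently the best you get is $\|\mathcal N\|_{W^{k-1,2}}\le C(\de+\ep)$ with a constant $C\ge 1$ coming from the Calder\'on--Zygmund estimate, and the absorption $C\de\le\de/2$ fails; $\Phi$ is neither a self-map with the required gain nor a contraction. No rearrangement of H\"older/Lorentz exponents rescues this, since $\na^kB$ lives only in $L^2$ and its cofactor would need to be small in $L^\infty$. (A secondary instance of the same issue: the monomial expansion $e^B=\sum B^j/j!$ and the claimed bound $\|e^B-\Id\|_{W^{k,2}}\lesssim\|B\|_{W^{k,2}}$ also require $B\in L^\infty$ if taken term by term; this part is repairable by keeping the exponentials grouped, but the critical product above is not.)

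This obstruction is exactly why the known proofs do not iterate: one runs a continuity argument along $t\mapsto t\Om$, obtaining openness from the implicit function theorem in a \emph{subcritical} topology (for smoothed data) and closedness from an a priori estimate for maps that \emph{already} satisfy the Coulomb condition. For such maps the curvature equation for $\xi$ has a div-curl (Jacobian) structure inherited from the antisymmetry of $\Om$, and Wente-type / $\mathcal H^1$--$\BMO$ compensation supplies precisely the superlinear smallness that the raw product estimate cannot. Your write-up never uses the antisymmetry of $\Om$ beyond keeping the iteration $so_m$-valued, which is a sign that the argument cannot be complete: the lemma is false for general (non-antisymmetric) $\Om$ with small critical norm. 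If you want a self-contained proof, you should reproduce the continuity scheme of \cite[Theorem 2.4]{deLongueville-Gastel-2019}; otherwise, cite it as the paper does.
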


%\textbf{Step 1.} Extend all functions to $\R^{2k}$ in a bounded way.

In the following, we shall write $B_r=B_r(0)$. Let $u\in W^{k,2}(B_1, \R^m)$ be a solution to \eqref{eq: Longue-Gastel system}. Denote
\begin{equation}\label{notation: theta_0}
\begin{aligned}
\ta_0&\equiv \sum_{i=0}^{k-2}\|w_i\|_{W^{2i+2-k,2}(B_1)}
+\sum_{i=1}^{k-1}\|V_i\|_{W^{2i+1-k,2}(B_1)}
 +\|\eta\|_{W^{2-k,2}(B_1)}+\|F\|_{W^{2-k,\frac{2k}{k+1},1}(B_1)}.
\end{aligned}
\end{equation}
Following \cite[Proof of Theorem 4.1]{deLongueville-Gastel-2019}, we may find $\Omega\in W^{k-1,2}(B_1,so(m)\otimes \wedge^1\R^{2k})$ satisfying
\[
-\De^{k-2}\delta\Om=\eta.
\]
If $\ta_0$ is small enough, then $\Om$ satisfies the smallness condition. Thus,
applying Lemma \ref{lem: Gauge transform higher} to $\Om$, we can find $P$ and $\xi$ which satisfy the properties stated in the lemma. 

Similar to the fourth order case, we want to derive the equation of
$P\De u$ in $B_{1/2}$.  Applying the Leibniz rule repeatedly, we find
\[
\De^{k-1}(P\De u)=P\De^{k}u+J+\na\De^{k-1}P\cdot\na u,
\]
where $J$ is given by
\begin{equation}
J\equiv\text{ a linear combination of terms of the form }{\rm div}^{a}\De^{b}(\na^{i}P\na^{j}u),\label{eq: def of J}
\end{equation}
where $a,b,i,j\in\N\cup\{0\}$, $1\le i,j\le k,$ $i+j\le2k-1$ and
$1\le a+2b=2k-i-j\le2k-2$.

Thus, by the system \eqref{eq: Longue-Gastel system}, we have
\begin{equation}
\De^{k-1}(P\De u)=J+\na\De^{k-1}P\cdot\na u+P\left(\sum_{l=0}^{k-1}\Delta^{l}\left\langle V_{l},du\right\rangle +\sum_{l=0}^{k-2}\Delta^{l}\delta\left(w_{l}du\right)\right)\quad \text{ in } B_{1/2}.
\label{eq: First time rewrite equation}
\end{equation}
It remains to deal with the last term in \eqref{eq: First time rewrite equation}.

As we have seen from the previous section, when  applying  Riesz potential theories to derive decay estimates,  it is not important how big the  constant coefficients  in front of terms of \eqref{eq: First time rewrite equation}.  From now on, we shall use $\sum_i f_i$ to represent linear combinations of $f_i$ without mentioning the constant coefficients of $f_i$.

For the term involving $V_{0}$, we  compute as that of de Longueville-Gastel \cite[the last line of page 12]{deLongueville-Gastel-2019} to deduce
\[
PV_{0}\cdot\na u=-\na\De^{k-1}P\cdot\na u+K_{0}\cdot\na u,
\]
where $K_{0}\in W^{2-k,\frac{2k}{k+1},1}$ (note  that our $K_0$ absorbs an extra $P$ comparing with \cite{{deLongueville-Gastel-2019}}).
%is a Sobolev function with negative derivatives.

For terms involving $V_{l}$ ($1\le l\le k-1)$, we consider two cases separately:
%\begin{enumerate}
%\item $2l\le k-1$;
%\item $2l>k-1$.
%\end{enumerate}
\begin{description}
\item[Case I] $2l\le k-1$;
\item[Case II] $2l>k-1$.
\end{description}
%Case 1. $2l\le k-1$ and Case 2: $2l>k-1$.

In \textbf{Case I}, we have
\[
P\De^{l}(V_{l}\cdot\na u)=\sum_{i=0}^{2l}P\na^{i}V_{l}\na^{2l+1-i}u,
\]
where with an abuse of notation we denote $\nabla^i$ for differential operators of order $i$, that is $\nabla^i$ is of the form $\nabla^{i_0}\Delta^{j_0}$ with $i=i_0+2j_0$.
Since $P\in W^{k,2}\cap L^{\wq}$ and $V_{l}\in W^{2l+1-k,2}$, we
know that $P\na^{i}V_{l}\in W^{2l+1-k-i,2,1}$ with $1-k\le2l+1-k-i\le0$.
Thus,
\[
\sum_{l=0}^{(k-1)/2}P\De^{l}(V_{l}\cdot\na u)=\sum_{l=0}^{(k-1)/2}\sum_{i=0}^{2l}K_{i,l}\na^{2l+1-i}u
\]
for some functions $K_{i,l}\in W^{2l+1-i-k,2,1}$. Organizing the
summation according to the order of the derivatives of $u$, we can
write the summation as
\[
\sum_{l=0}^{(k-1)/2}P\De^{l}(V_{l}\cdot\na u)=\sum_{i=1}^{k}K_{i}\na^{i}u
\]
for some $K_{i}\in W^{i-k,2,1}$, $1\le i\le k$. Note that $K_{i}, (1\le i\le k)$
are Sobolev functions with negative exponent. To make the summation
clear, write $K_{i}=\sum_{|\al|\le k-i}\na^{\al}K_{i,\al}$ for some
$K_{i,\al}\in L^{2,1}$. Then
\[
\sum_{i=1}^{k}K_{i}\na^{i}u=\sum_{i=1}^{k}\sum_{|\al|=0}^{k-i}\sum_{0\le\be\le\al}\na^{\be}(K_{i,\al}\na^{\al+i-\be}u).
\]
Again, organizing the summation according to the order of the derivatives
of $u$, we obtain
\[
\sum_{i=1}^{k}K_{i}\na^{i}u=\sum_{a=0}^{k-1}\na^{a}\left(\sum_{i=1}^{k-a}K_{i,a}\na^{i}u\right)
\]
for some functions $K_{i,a}\in L^{2,1}$. Thus,
\begin{equation}
\sum_{l=0}^{(k-1)/2}P\De^{l}(V_{l}\cdot\na u)=\sum_{a=0}^{k-1}\na^{a}\left(\sum_{i=1}^{k-a}K_{i,a}\na^{i}u\right)\qquad  \text{ in } B_{1/2},\label{eq: case 1}
\end{equation}
with coefficient functions $K_{i,a}\in L^{2,1}(B_{1/2})$ for all $i,a$ such that  $1\le i\le k-a\le k$.

In \textbf{Case II}, $0<2l+1-k\le k-1$.
Note that
\begin{equation}\label{eq:iteration order k}
\begin{aligned}
P\nabla^kf&=\nabla\big(P\nabla^{k-1}f\big)-\nabla p\cdot \nabla^{k-1}f\\
&=\left[\nabla^2\big(P\nabla^{k-2}f\big)-\nabla\big(\nabla P\nabla^{k-2}f \big)\right]-\left[\nabla\big(\nabla P\nabla^{k-2}f-\nabla^2P\nabla^{k-2}f \big)\right]\\
&=\sum_{s=0}^2\nabla^{2-s}\big(\nabla^s P  \nabla^{k-2}f\big)=\cdots\cdots=\sum_{s=0}^k\nabla^{k-s}\big(\nabla^sP f\big).
\end{aligned}
\end{equation}
Thus, we obtain
\[
\begin{aligned}P\De^{l}(V_{l}\cdot\na u) & =P\na^{k-1}\na^{2l+1-k}(V_{l}\cdot\na u)\\
 & =P\na^{k-1}\left(\sum_{t=0}^{2l+1-k}\na^{t}V_{l}\na^{2l+2-k-t}u\right)\\
 & =\sum_{a=0}^{k-1}\na^{k-1-a}\left[\na^{a}P\left(\sum_{t=0}^{2l+1-k}\na^{t}V_{l}\na^{2l+2-k-t}u\right)\right]\\
 & =\sum_{a=0}^{k-1}\sum_{t=0}^{2l+1-k}\na^{k-1-a}\left(\na^{a}P\na^{t}V_{l}\na^{2l+2-k-t}u\right).
\end{aligned}
\]
Note that the right hand side of the above equality are of divergence
structure, except the term where $a=k-1$. Moreover,
\[
\na^{a}P\na^{t}V_{l}\in W^{k-a,2}\cdot W^{2l+1-k-t,2}\hookrightarrow L^{\frac{2k}{a},2}\cdot L^{\frac{2k}{2k-(2l+1-t)},2}\hookrightarrow L^{\frac{2k}{2k-(2l+1-t)+a},1}
\]
and
\[
\na^{2l+2-k-t}u\in L^{\frac{2k}{2l+2-t-k},\wq},
\]
which imply, by H\"older's inequality, that
\[
\na^{a}P\na^{t}V_{l}\na^{2l+2-k-t}u\in L^{\frac{2k}{k+a+1},1}
\]
for $0\le a<k-1$, and
\[
\na^{k-1}P\na^{t}V_{l}\na^{2l+2-k-t}u\in L^{1}
\]
when $a=k-1$.

Hence, there holds
\[
\sum_{l\ge k/2}^{k-1}P\De^{l}(V_{l}\cdot\na u)=\sum_{a=0}^{k-1}\na^{k-1-a}\left(\sum_{l=k/2}^{k-1}\sum_{t=0}^{2l+1-k}K_{l,a,t}\na^{2l+2-k-t}u\right)
\]
for some functions
\[
K_{l,a,t}\in L^{\frac{2k}{2k-(2l+1-t)+a},1}
\]
for $k/2\le l\le k-1$, $0\le t\le2l+1-k$ and $0\le a\le k-1$.
Set $2l+2-k-t=i$ and $k-1-a=b$. We can further simplify the above
summation as
\begin{equation}
\sum_{l\ge k/2}^{k-1}P\De^{l}(V_{l}\cdot\na u)=\sum_{b=0}^{k-1}\na^{b}\left(\sum_{i=1}^{k}\tilde{K}_{b,i}\na^{i}u\right)\qquad \text{ in } B_{1/2}\label{eq: case 2}
\end{equation}
with coefficient functions $\tilde{K}_{b,i}\in L^{\frac{2k}{2k-b-i},1}(B_{1/2})$
for all $0\le b\le k-1,$ $1\le i\le k$.

Hence, we derive from \eqref{eq: case 1} and \eqref{eq: case 2} that
\[
\begin{aligned}\sum_{l=0}^{k-1}P\De^{l}(V_{l}\cdot\na u) & =-\na\De^{k-1}P\cdot\na u+K_{0}\cdot\na u\\
 & \quad+\sum_{a=0}^{k-1}\na^{a}\left(\sum_{i=1}^{k-a}K_{i,a}\na^{i}u\right)+\sum_{b=0}^{k-1}\na^{b}\left(\sum_{i=1}^{k}\tilde{K}_{b,i}\na^{i}u\right),
\end{aligned}
\]
where
\begin{eqnarray*}
K_{i,a}\in L^{2,1}(B_{1/2}) & \text{and} & \tilde{K}_{b,i}\in L^{\frac{2k}{2k-b-i},1}(B_{1/2}).
\end{eqnarray*}
Note that $K_{0}\in W^{2-k,\frac{2k}{k+1},1}\subset W^{1-k,2,1}$.
We can deal with $K_{0}\cdot\na u$ as in \textbf{Case I}. As a result, we
find that
\[
\sum_{l=0}^{k-1}P\De^{l}(V_{l}\cdot\na u)=-\na\De^{k-1}P\cdot\na u+\sum_{a=0}^{k-1}\na^{a}\left(\sum_{i=1}^{k-a}K_{i,a}\na^{i}u\right)+\sum_{b=0}^{k-1}\na^{b}\left(\sum_{i=1}^{k}\tilde{K}_{b,i}\na^{i}u\right)
\]
for some $K_{i,a}\in L^{2,1}$ with $1\le i\le k-a\le k$ and $\tilde{K}_{b,i}\in L^{\frac{2k}{2k-b-i},1}$
for all $0\le b\le k-1$ and $1\le i\le k$.

This summation can be further simplified. Indeed, note that $i\le k-a$
implies  $2\ge\frac{2k}{2k-i-a}$, from which it follows  $K_{i,a}\in L^{2,1}(B)\subset L^{\frac{2k}{2k-a-i},1}(B)$
for all $1\le i\le k-a\le k$. Thus, we can simplify the above summation
as
\begin{equation}
\sum_{l=0}^{k-1}P\De^{l}(V_{l}\cdot\na u)=-\na\De^{k-1}P\cdot\na u+\sum_{a=0}^{k-1}\na^{a}\left(\sum_{i=1}^{k}K_{i,a}\na^{i}u\right)\qquad \text{ in } B_{1/2},\label{eq: furthersimplified coefficients}
\end{equation}
under the assumption that
\[
K_{i,a}\in L^{\frac{2k}{2k-a-i},1}(B_{1/2})\qquad\text{for all }0\le a\le k-1,1\le i\le k.
\]
This regularity assumption turns out to be the critical regularity we shall need in the sequel.

Similarly, for terms involving $w_{l}$ ($0\le l\le k-2$), we may consider the following two cases
\begin{description}
\item[Case I] $2l+1\le k-1$;
\item[Case II] $2l+1\ge k$.
\end{description}

Doing the case study exactly as in the previous case, we find
\begin{equation*}
\sum_{l=0}^{k-2}P\De^{l}{\delta}(w_{l}\D u)=\sum_{a=0}^{k-1}\na^{a}\left(\sum_{i=1}^{k-a}K_{i,a}\na^{i}u\right)+\sum_{b=0}^{k-1}\na^{b}\left(\sum_{i=1}^{k}\tilde{K}_{b,i}\na^{i}u\right)
\end{equation*}
where $K_{i,a}\in L^{2,1}$ and $\tilde{K}_{b,i}\in L^{\frac{2k}{2k-b-i},1}$. This again can be simplified as
\begin{equation}
\sum_{l=0}^{k-2}P\De^{l}{\delta}(w_{l}\D u)=\sum_{a=0}^{k-1}\na^{a}\left(\sum_{i=1}^{k}\tilde{K}_{b,i}\na^{i}u\right) \qquad \text{ in } B_{1/2},\label{eq: rearrangement of w}
\end{equation}
under the assumption that
\[
\tilde{K}_{b,i}\in L^{\frac{2k}{2k-b-i},1}(B_{1/2}).
\]

Finally, combining \eqref{eq: First time rewrite equation}, \eqref{eq: furthersimplified coefficients}
and \eqref{eq: rearrangement of w} gives
\begin{equation}
\begin{aligned}\De^{k-1}(P\De u)= & J+\sum_{a=0}^{k-1}\na^{a}\left(\sum_{i=1}^{k}K_{a,i}\na^{i}u\right)  \qquad \text{ in } B_{1/2},\end{aligned}
\label{eq: rewritten of equation}
\end{equation}
where $J$ is defined as in \eqref{eq: def of J} and
\begin{eqnarray*}
K_{a,i}\in L^{\frac{2k}{2k-a-i},1}(B_{1/2}) &  & \text{for all }0\le a,i-1\le k-1.
\end{eqnarray*}

\subsection{Proof of Lemma  \ref{lem: decay of Long-Gastel}}

%\textbf{Step 4.} Homogenization.

Let $\epsilon=\epsilon(k,m)$ be given as in Lemma \ref{lem: Gauge transform higher}. By the scaling invariance of the system \eqref{eq: Longue-Gastel system} (see e.g. \cite{deLongueville-Gastel-2019}), we can additionally assume that
$$\ta_0<\epsilon,$$ where $\ta_0$ is defined as in \eqref{notation: theta_0}.
Then all the functions appeared in the above subsection (including $P,\xi, K_{i,a}$ etc.) are well defined on the ball $B_{1/2}$, with norms bounded by $C\ep$. We first extend them from $B_{1/2}$  to the whole space $\R^{2k}$ with supports in $B_2$ such that their respective (Lorentz-Sobolev) norms in $\R^{2k}$ are no more than a bounded constant multiplying the corresponding norms in $B_{1/2}$.  For simplicity, we keep using the same notations for the extended  functions.

% Following \cite[Proof of Theorem 4.1]{deLongueville-Gastel-2019}, we may find $\Omega\in W^{k-1,2}(B_1,so(m)\otimes \wedge^1\R^{2k})$ satisfying
%\[ -\De^{k-2}\delta\Om=\eta.\]
%In particular, $\|\Omega\|_{W^{m-1,2}(B_1)}\leq c\|\eta\|_{W^{2-k,2}}\leq c\epsilon$. Applying Lemma \ref{lem: Gauge transform higher} to $\Om$, we may find $P$ and $\xi$ as in the lemma such that \[ \Om P^{-1}+P\D P^{-1}=\ast\D\xi \qquad \text{ in } B_{1/2}. \]
%Moreover,
%$$\|\D P\|_{W^{m-1,2}(B_{1/2})}+\|\delta \xi\|_{W^{m-1,2}(B_{1/2})}\lesssim \epsilon.$$

We start from the equation \eqref{eq: rewritten of equation}.
Let $I_{\alpha}$ be the fractional Riesz operator of order $\al\in (0,n)$. Set
\[
f=I_{2(k-1)}\left[J+\sum_{a=0}^{k-1}\na^{a}\left(\sum_{i=1}^{k}K_{a,i}\na^{i}u\right)\right] \qquad \text{ in }\R^{2k}
\]
and
\[
g=P\De u-f \qquad \text{ in }\R^{2k}.
\]

We first estimate the $L^{k,\wq}(\R^{2k})$ norm of $f$.

By the definition \eqref{eq: First time rewrite equation} of $J$,
we have
\[
I_{2(k-1)}J=\text{a linear combination of }\na^{2k-i-j}I_{2(k-1)}(\na^{i}P\na^{j}u),
\]
where $1\le i,j\le k$. With the convention that $I_{0}$ means some
singular integral operator, we obtain
\[
|\na^{2k-i-j}I_{2(k-1)}(\na^{i}P\na^{j}u)|\lesssim I_{i+j-2}(|\na^{i}P||\na^{j}u|).
\]
Since $\na^{i}P\in L^{{2k}/{i},2}$ and $\na^{j}u\in L^{{2k}/{j},\wq}$, H\"older's inequality gives $|\na^{i}P||\na^{j}u|\in L^{{2k}/{(i+j)},2}$. By the Riesz potential theorem, Proposition \ref{prop:Riesz potential}, we find that  $I_{i+j-2}(|\na^{i}P||\na^{j}u|)\in L^{k,\wq}$. Moreover,
\[
\||\na^{2k-i-j}I_{2(k-1)}(\na^{i}P\na^{j}u)\|_{L^{k,\wq}(\R^{2k})}\lesssim\|\nabla P\|_{W^{k-1,2}}\sum_{j=1}^{k}\|\na^{j}u\|_{L^{2k/j,\wq}}\lesssim\ep\sum_{j=1}^{k}\|\na^{j}u\|_{L^{2k/j,\wq}}.
\]
Taking summation over $i,j$, we deduce
\begin{equation}
\||I_{2(k-1)}(J)\|_{L^{k,\wq}(\R^{2k})}\lesssim\ep\sum_{j=1}^{k}\|\na^{j}u\|_{L^{2k/j,\wq}}\lesssim\ep\sum_{j=1}^{k}\|\na^{j}u\|_{L^{2k/j,\wq}(B_{1})}.\label{eq: estimate of potential of J}
\end{equation}

For $0\le a\le k-1$ and $1\le i\le k$, we have $K_{a,i}\na^{i}u\in L^{\frac{2k}{2k-a}}.$
Applying the Riesz potential theorem again, we deduce
\[
\|I_{2(k-1)}(\na^{a}(K_{a,i}\na^{i}u))\|_{L^{k,\wq}}\lesssim\|I_{2(k-1)-a}(|K_{a,i}\na^{i}u|)\|_{L^{k,\wq}}\lesssim\|K_{a,i}\|_{L^{\frac{2k}{2k-a-i},1}}\|\na^{i}u\|_{L^{2k/i,\wq}}.
\]
Thus,
\[
\left\|I_{2(k-1)}\Big(\sum_{a=0}^{k-1}\na^{a}\big(\sum_{i=1}^{k}K_{a,i}\na^{i}u\big)\Big)\right\|_{L^{k,\wq}}\lesssim\ep\sum_{j=1}^{k}\|\na^{j}u\|_{L^{2k/j,\wq}(B_{1})}.
\]
Consequently, we derive
\[
\left\Vert f\right\Vert _{L^{k,\wq}(\R^{2k})}\lesssim\ep\sum_{j=1}^{k}\|\na^{j}u\|_{L^{2k/j,\wq}(B_{1})}.
\]
In general, we obtain by the same method that, for each $1\le i\le k-2$
\[
\left\Vert \na^{i}f\right\Vert _{L^{k,\wq}(\R^{2k})}\lesssim\ep\sum_{j=1}^{k}\|\na^{j}u\|_{L^{2k/j,\wq}(B_{1})}.
\]

For the function $g$, note that $\De^{k-1}g=0$ on $B_{1/2}$. Thus,
for any $0<r<1/2$, we may apply \cite[Lemma 6.2]{Gastel-Scheven-2009CAG} to deduce
\[
\begin{aligned}\|g\|_{L^{k,\wq}(B_{r})} & \le C_{k}\|g\|_{L^{\wq}(B_{1/2})}r^{2}\lesssim\|g\|_{L^{1}(B_{1})}r^{2}\\
 & \lesssim r^{2}(\|P\De u\|_{L^{1}(B_{1})}+\|f\|_{L^{1}(B_{1})})\\
 & \lesssim r^{2}(\|\De u\|_{L^{k,\wq}(B_{1})}+\|f\|_{L^{k,\wq}(B_{1})})\\
 & \lesssim r^{2}\sum_{j=1}^{k}\|\na^{j}u\|_{L^{2k/j,\wq}(B_{1})},
\end{aligned}
\]
and
\[
\sum_{i=1}^{k-1}\|\na^{i}g\|_{L^{2k/i,\wq}(B_{r})}\lesssim r\sum_{j=1}^{k}\|\na^{j}u\|_{L^{2k/j,\wq}(B_{1})}.
\]
Therefore, it follows
\begin{equation}
\begin{aligned}\|\De u\|_{L^{k,\wq}(B_{r})} & \lesssim\|P\De u\|_{L^{k,\wq}(B_{r})}\\
 & \lesssim\|g\|_{L^{k,\wq}(B_{r})}+\|f\|_{L^{k,\wq}(B_{r})}\\
 & \lesssim(r^{2}+\ep)\sum_{j=1}^{k}\|\na^{j}u\|_{L^{2k/j,\wq}(B_{1})}.
\end{aligned}
\label{eq: second order estimate}
\end{equation}

The estimate of $\|\na u\|_{L^{2k,\wq}}$ is a little bit tricky.
We use the idea of Struwe \cite[formula (56)]{Struwe-2008} as follows. Split $u$ by $u=u_{0}+u_{1}$
in $B_{r}$, such that $u_{0}$ is a harmonic function in $B_{r}$
and $u_{1}=0$ on $\pa B_{r}$. Then, there exists a constant $C>0$
depending only on $k$, such that
\[
\|\na u_{0}\|_{L^{2k,\wq}(B_{\ga r})}\le C\ga\|\na u_{0}\|_{L^{2k,\wq}(B_{r})}
\]
for any $\ga\in(0,1)$. Since $\De u_{1}=\De u$ in $B_{r}$ with
zero boundary value, the $L^{p}$-theory (see e.g. \cite[Lemma 2.1]{Gastel-Scheven-2009CAG}) implies that
\[
r^{-1}\|\na u_{1}\|_{L^{k,\wq}(B_{r})}+\|\na^{2}u_{1}\|_{L^{k,\wq}(B_{r})}\lesssim\|\De u\|_{L^{k,\wq}(B_{r})}.
\]
So, using the Sobolev embedding $\|\na u_{1}\|_{L^{2k,\wq}(B_{r})}\lesssim r^{-1}\|\na u_{1}\|_{L^{k,\wq}(B_{r})}+\|\na^{2}u_{1}\|_{L^{k,\wq}(B_{r})}$,
we obtain
\[
\|\na u_{1}\|_{L^{2k,\wq}(B_{r})}\lesssim\|\De u\|_{L^{k,\wq}(B_{r})}.
\]
Therefore, for any fixed $\ga>0$, we derive
\[
\begin{aligned}\|\na u\|_{L^{2k,\wq}(B_{\ga r})} & \lesssim\|\na u_{0}\|_{L^{2k,\wq}(B_{\ga r})}+\|\na u_{1}\|_{L^{2k,\wq}(B_{\ga r})}\\
 & \lesssim\ga\|\na u_{0}\|_{L^{2k,\wq}(B_{r})}+\|\na u_{1}\|_{L^{2k,\wq}(B_{r})}\\
 & \lesssim\ga\|\na u\|_{L^{2k,\wq}(B_{r})}+\|\na u_{1}\|_{L^{2k,\wq}(B_{r})}\\
 & \lesssim\ga\|\na u\|_{L^{2k,\wq}(B_{r})}+\|\De u\|_{L^{k,\wq}(B_{r})}.
\end{aligned}
\]
Hence it follows from (\ref{eq: second order estimate}) that
\begin{equation}
\|\na u\|_{L^{2k,\wq}(B_{\ga r})}\lesssim(\ga+r^{2}+\ep)\sum_{j=1}^{k}\|\na^{j}u\|_{L^{2k/j,\wq}(B_{1})}\label{eq: first order estimate}
\end{equation}

To obtain estimates for higher order derivatives of $u$, we derive
from $P\De u=f+g$ that
\[
P\De\na u=-\na P\De u+\na f+\na g.
\]
This gives
\[
\begin{aligned}\|\De\na u\|_{L^{2k/3,\wq}(B_{r})} & \lesssim\|\na P\De u\|_{L^{2k/3,\wq}(B_{r})}+\|\na f\|_{L^{2k/3,\wq}(B_{r})}+\|\na g\|_{L^{2k/3,\wq}(B_{r})}\\
 & \lesssim(r^{2}+\ep)\sum_{j=1}^{k}\|\na^{j}u\|_{L^{2k/j,\wq}(B_{1})},
\end{aligned}
\]
which implies, by combining the $L^{p}$-theory and (\ref{eq: second order estimate})
(\ref{eq: first order estimate}), that

\[
\left\Vert \na^{3}u\right\Vert _{L^{2k,\wq}(B_{\ga r})}\lesssim(\ga+r^{2}+\ep)\sum_{j=1}^{k}\|\na^{j}u\|_{L^{2k/j,\wq}(B_{1})}.
\]

Repeating this process, we finally obtain a constant $C>0$ depending
only on $k,m$, such that

\[
\sum_{j=1}^{k}\|\na^{j}u\|_{L^{2k/j,\wq}(B_{\ga r})}\le C(\ga+r^{2}+\ep)\sum_{j=1}^{k}\|\na^{j}u\|_{L^{2k/j,\wq}(B_{1})}.
\]
Thus, choosing $\ep,r,\ga$ sufficiently small such that $C(\ga+r^{2}+\ep)\le1/2$,
and then letting $\tau=\ga r$, we obtain
\[
\sum_{j=1}^{k}\|\na^{j}u\|_{L^{2k/j,\wq}(B_{\tau})}\le\frac{1}{2}\sum_{j=1}^{k}\|\na^{j}u\|_{L^{2k/j,\wq}(B_{1})}.
\]
Finally, using a standard scaling argument, we obtain (\ref{eq: decay estimates of higher order system}).
The proof of Lemma \ref{lem: decay of Long-Gastel} is complete.

\section{A remaining open problem}\label{sec:obstacle}

Recall that Rivi\`ere and Struwe \cite{Riviere-Struve-2008} not only established their regularity result in the critical dimension $n=2$, but also a partial regularity theory in supercritical dimensions $n\ge 3$, which then generalized the well-known partial regularity theory of stationary harmonic mappings established by Evans \cite{Evans-1991} and Bethuel \cite{Bethuel-1993}. Thus, a natural question is to extend the  partial regularity theory of Rivi\`ere and Struwe \cite{Riviere-Struve-2008} to higher order elliptic systems in supercritical dimensions.  More precisely, the problem can be formulated as follows.
\begin{problem}[Supercritical case]\label{prob:supercrticial case} Consider the system \eqref{eq: Longue-Gastel system} in $B^n$, $n>2k$.
Does Theorem \ref{thm:general case} continue to hold in terms of partial regularity?
\end{problem}

In this section, we briefly discuss the obstacle to answer Problem \ref{prob:supercrticial case}. For simplicity, we shall only discuss the fourth order case and there is no essential difference regarding the higher order cases. Carefully checking the arguments in Section \ref{sec: Proof of main results}, it is plausible that our method is  flexible enough for the supercritical case, provided that we replace the Lorentz spaces used in the proof with the so called Lorentz-Morrey spaces.

%\subsubsection{Lorentz-Morrey spaces}
Let $\Om\subset\R^{n}$ be an open set with smooth boundary. Let $1\le p<\wq$
and $0\le s<n$. The \emph{Morrey space} $M^{p,s}(\Om)$ consists
of functions $f\in L^{p}(\Om)$ such that
\[
\|f\|_{M^{p,s}(\Om)}\equiv\left(\sup_{x\in\Om,r>0}r^{-s}\int_{B_{r}(x)\cap\Om}|f|^{p}\right)^{1/p}<\wq.
\]
The \emph{$k$-th order Morrey space} $M_{k}^{p,n-kp}(\Om)$ consists of $f\in W^{k,p}(\Om)$
such that $\na^{l}u\in M^{p,n-lp}(\Om)$.

For $1<p<\infty$, we define the \emph{weak Morrey space} $M^{p,s}_*(\Omega)$ as the space of functions $f\in L^{p,\infty}(\Omega)$ such that
$$\|f\|_{M^{p,s}_*(\Om)}
\equiv\left(\sup_{x\in\Om,r>0}r^{-s}\|f\|^p_{L^{p,\wq}(B_r(x)\cap \Omega)}\right)^{1/p}<\wq.$$

Let $1\le p<\wq$ and $1\le q\le\wq$, $0<s<n$.
%Denote by $L^{p,q}(\Om)$ the classical Lorentz space.
The \emph{Lorentz-Morrey space} $LM^{p,q,s}(\Om)$ consists of functions $u\in L^{p,q}(\Om)$ such that
\[
\|u\|_{LM^{p,q,s}(\Om)}^{p}\equiv\sup_{x\in\Om,r>0}r^{-s}\|u\|_{L^{p,q}(B_{r}(x)\cap\Om)}^{p}.
\]
It is easy to see that $LM^{p,p,s}=M^{p,s}$, $LM^{p,\infty,s}=M_{*}^{p,s}$ and since $L^{p,q_{1}}\subset L^{p,q_{2}}$
if $q_{1}<q_{2}$, we have
\[
LM^{p,q_{1},s}\subset LM^{p,q_{2},s}.
\]
%It is straightforward to verify that for each $k\in \N$, we have
%$$\|u^k\|_{LM^{p,q,s}}=\|u\|_{LM^{kp,kq,s}}.$$

%validity of Proposition~\ref{prop:improved embedding}, or equivalently, the embedding \eqref{eq: embedding} for general $n\geq 4$. That is, whether the following inclusion holds:
As we have already seen in the proof of Theorem~\ref{thm:main thm}, an essential ingredient we used is the Lorentz-Sobolev embedding 
\begin{equation}\label{eq: Tartar}
W^{1,2}(\R^4) \hookrightarrow L^{4,2}(\R^4).
\end{equation}  In supercritical dimensions,  the following embedding (if held) seems to be a suitable replacement of \eqref{eq: Tartar}:
\begin{equation}\label{eq: embedding-1}
M_{1}^{2,n-4}\cap M^{4,n-4}(\R^n)\hookrightarrow LM^{4,2,n-4}(\R^n).
\end{equation}
When $n=4$, it reduces to  \eqref{eq: Tartar}.
%As we have already seen in the proof of Theorem~\ref{thm:main thm}, this is the only obstacle for extending the proof straightforward to the supercritical dimension case.

%Consider the special case $u\in W^{2,2}(\R^{n})$. Then, $u=I_{2}(-\De u)$.
%This implies
%\[
%|\na u|\le I_{1}(|\De u|).
%\]
%Thus, if $u\in M_{2}^{2,n-4}(\R^{n})$, then applying Proposition
%4.2 of Wang \cite{Wang-2004-CPAM} (with $p=2,\al=1,\la=4$) gives
%$\na u\in M^{4,n-4}$ and
%\[
%\|\na u\|_{LM^{4,4,n-4}}=\|\na u\|_{M^{4,n-4}}\le C\|\De u\|_{M^{2,n-4}}.
%\]
%This is in fact related with Reviere's inequality. Since $\De u\in LM^{2,2,n-4}$,
%we want to get $\na u\in LM^{4,2,n-4}$.

A possible approach to the above embedding is as follows. Suppose $f\in M_{1}^{2,n-4}\cap M^{4,n-4}(\R^{n})$ has compact support and let $\Gamma$ be the fundamental solution of $-\Delta$. Then, we have the following pointwise estimate
\begin{equation*}\label{eq:estimate of f by Riesz potential}
f=\Ga\ast\De f=\na\Ga\ast\na f\approx I_{1}(|\na f|).
\end{equation*}
If we were able to show the boundedness of
\begin{equation}\label{eq:Needed Riesz bound}
I_{1}\colon LM^{2,2,n-4}(\R^n)\to LM^{4,2,n-4}(\R^{n})\footnote{Note that this is true when $n=4$.},
\end{equation}
then it would follow that
\[
\begin{aligned}
\|f\|_{LM^{4,2,n-4}(\R^n)}&\lesssim \|I_1(\nabla f)\|_{LM^{4,2,n-4}(\R^n)}\lesssim \|\nabla f\|_{LM^{2,2,n-4}(\R^n)}<\infty
\end{aligned}
\]
and so \eqref{eq: embedding-1} follows.

Inspired by the above analysis and taking into consideration of the fractional Riesz operator between Lorentz/Morrey spaces, we would like to put up the following conjecture, which might have potential applications elsewhere as well.
%we wish that the following more general  result for boundedness of fractional Riesz operator between Lorentz-Morrey spaces could be  hold, since potential applications can be expected.
\medskip

\textbf{Conjecture}: Let $0\leq \alpha<n$, $1<p_0<\frac{n}{\alpha}$, $0<\lambda<n-\alpha p_0$. If
\begin{equation*}\label{eq:sharp relation LM}
	\frac{1}{p_0}-\frac{1}{p_1}=\frac{\alpha}{n-\lambda}
\end{equation*}
and
\begin{equation}\label{eq:relation LM 2}
	q_1\geq q_0
\end{equation}
hold, then
$$I_\alpha \colon LM^{p_0,q_0,\lambda}(\R^n)\to LM^{p_1,q_1,\lambda}(\R^n)$$
is bounded.

\begin{remark}\label{rmk:on the claim}
	i). If $\lambda=0$, then the above claim covers the sharp boundedness of the fractional Riesz operator between Lorentz spaces.
	
	ii). If $p_0=q_0$ and $p_1=q_1$, then the above claim covers the sharp boundedness of the fractional Riesz operator between Morrey spaces.
	
%	iii). It is easy to see that
%	$$\|M_\alpha(g)\|_{LM^{p,q,\lambda}}\approx \|I_\alpha(g)\|_{LM^{p,q,\lambda}}$$
%	and so we get the boundededness of the fractional Riesz and maximal operators in the same range.
	
	iii). If we replace \eqref{eq:relation LM 2} by $\frac{p_0}{q_0}\geq \frac{p_1}{q_1}$, then \textbf{Conjecture} holds by  \cite[Theorem 5.1]{Ho-2014}.
\end{remark}

Note that if the \textbf{Conjecture} were true, then applying it with $p_0=2$, $p_1=4$, $\alpha=1$, $\lambda=n-4$ and $q_0=q_1=2$, would give the boundedness of $I_1\colon LM^{2,2,n-4}\to LM^{4,2,n-4}$, which is our desired \eqref{eq:Needed Riesz bound}.
%and so \eqref{eq:sharp relation LM} and \eqref{eq:relation LM 2} hold as
%$$\frac{1}{p_0}-\frac{1}{p_1}=\frac{1}{2}-\frac{1}{4}=\frac{1}{4}=\frac{\alpha}{n-(n-4)}.$$
%Thus $I_1\colon LM^{2,2,n-4}\to LM^{4,2,n-4}$ is bounded. This shall imply the validity of Proposition~\ref{prop:improved embedding} for general $n\geq 4$.

\appendix

\section{Regularity assumptions  in the biharmonic case}\label{subsec:regularity assumptions}
In this appendix, we show that our regularity assumptions \eqref{eq: coefficient G n=4}, \eqref{eq: coefficients D-E n=4} and \eqref{eq: coefficients Omega n=4}  are automatically  satisfied in the  case when $u\colon B_{10}^{4}\to N\subset\R^{m}$ is a stationary biharmonic mapping, where $N\subset \R^m$ is a closed Riemannian manifold. %We shall follow closely the presentation of Struwe \cite[Section 2]{Struwe-2008}.

Following the computation of Struwe (see in particular \cite[Equations (12), (14), (16)]{Struwe-2008}), we know that $u$ satisfies the linear system \eqref{eq: Lamm-Riviere system n=4} with
\[
D=\big(D_{\al}^{i,j}\big)=\big(3(w^{j}\pa_{\al}w^{i}-w^{i}\pa_{\al}w^{j})\big),
\]
\[
E=\big(E_{\alpha,\beta}^{i,j}\big)=\big(\left(4\pa_{\al\be}(w^{i}w^{j})-\de_{\al\be}\De(w^{i}w^{j})\right)\big),
\]
and $F=\De\Om+G$ with
\begin{eqnarray*}
\Om=\big(\Omega_{ij} \big)=\big((w^{i}\na w^{j}-w^{j}\na w^{i})\big)
\end{eqnarray*}
and $G$ involves sums of terms like $\nabla^2 w^i\nabla w^j$, where $w^{i}=(\nu_{1}^{i}(u),\cdots,\nu_{k}^{i}(u))$ with $\{\nu_{i}\}_{i\le k}$ being a smooth local normal vector frame of the target manifold.

(1) $D$ is a linear combination of $w^{i}\pa_{l}\nu^{j}(u)\pa_{\al}u^{l}$.
This implies that $|D|\le C|\na u|$ and $|\na D|\le C(|\na^{2}u|+|\na u|^{2})$.
Thus, we get $D\in W^{1,2}$.

(2) From the expression of $E$, we know $|E|\le C(|\na^{2}u|+|\na u|^{2})$.
Thus, $E\in L^{2}$.
%\[
%\|E\|_{M^{2,n-4}}\le C.
%\]

(3) From the expression of  $G$, we know
$|G|\le C(|\na^{2}u||\na u|+|\na u|^{3})$. Since $\nabla u\in W^{1,2}$, by the Lorentz-Sobolev embedding, $\nabla u\in L^{4,2}$.
%So $\int_{B_{r}}|G|^{4/3}\le Cr^{n-4}$.
%That is,
%\[
%G\in M^{\frac{4}{3},n-4}.
%\]
%However, in view of T. Lamm and T. Rivi\`ere \cite{Lamm-Riviere-2008},
%we may need to assume that $G$ has better regularity. If $\na u\in LM^{4,2,n-4}(B_{10})$,
Applying Proposition \ref{prop: Lorentz-Holder inequality} (with $p_1=q_1=2$ and $p_2=4,q_2=2$), we infer that
\[
G\in L^{\frac{4}{3},1}.
\]
%This will be used in the following if necessary.

(4) From the expression of $\Om$, we know $|\Om|\le C|\na u|$ and $|\na\Om|\le C(|\na^{2}u|+|\na u|^{2})$.
So $\Om$ has the same regularity assumption as $D$: $\Om\in W^{1,2}$.
%or equivalently,
%\[
%\|\Om\|_{M^{4,n-4}}+\|\na\Om\|_{M^{2,n-4}}\le C.
%\]
In summary,  regularity assumptions \eqref{eq: coefficient G n=4}, \eqref{eq: coefficients D-E n=4} and \eqref{eq: coefficients Omega n=4} are satisfied in this case.


\begin{thebibliography}{10}
%\bibitem{Adams-1975}	
%\textsc{D.R. Adams}, \emph{A note on Riesz potentials}. Duke Math. J. \textbf{42} (1975), no. 4, 765-778. 	
	
\bibitem{Adams-book}\textsc{R.C. Adams and J.F. Fournier}, \emph{Sobolev spaces.} Second edition. Pure and Applied Mathematics (Amsterdam),
\textbf{140}. Elsevier/Academic Press, Amsterdam, 2003.	

\bibitem{Angelsberg-Pumberger-AGAG2009}
\textsc{G. Angelsberg and D. Pumberger}, \emph{A regularity result for polyharmonic maps with higher integrability}. Ann. Global Anal. Geom. \textbf{35} (2009), no. 1, 63-81.

\bibitem{Bethuel-1992}
\textsc{F. Bethuel,} \emph{Un r\'esultat de r\'egularit\'e pour les solutions de l'\'equation de surfaces \`a courbure moyenne prescrite}. C. R. Acad. Sci. Paris S\'er. I Math. \textbf{314} (1992), no. 13, 1003-1007.

\bibitem{Bethuel-1993}\textsc{F. Bethuel,} \emph{On the singular
set of stationary harmonic maps.} Manuscripta Math. \textbf{78}(4),
(1993) 417-443.

\bibitem{Chang-W-Y-1999} \textsc{S.-Y.A. Chang, L. Wang and P.C. Yang,}
\emph{A regularity theory of biharmonic maps.} Commun. Pure Appl.
Math. \textbf{52}(9) (1999), 1113-1137.

\bibitem{deLongueville-2018}
\textsc{F.L. de Longueville}, \emph{Regularit\"at der L\"osungen von Systemen $(2m)$-ter Ordnung vom polyharmonischen Typ in kritischer Dimension}. Dissertation Universit\"at Duisburg-Essen 2018 (see https://d-nb.info/1191692124/34).

\bibitem{deLongueville-Gastel-2019}
\textsc{F.L. de Longueville and A. Gastel}, \emph{Conservation laws for even order systems of polyharmonic map type.} Preprint 2019.

\bibitem{DeVore-Scherer-1979}
\textsc{R. DeVore and K. Scherer}, \emph{Interpolation of linear operators on Sobolev spaces}. Ann. of Math. (2) \textbf{109} (1979), no. 3, 583-599.

\bibitem{Evans-1991}\textsc{C.L. EVANS,}
\emph{ Partial regularity for stationary harmonic maps into spheres.} Arch. Rat. Mech. Anal. \textbf{116} (1991), 101-163.

\bibitem{Gastel-Scheven-2009CAG}
\textsc{A. Gastel and C. Scheven}, \emph{Regularity of polyharmonic maps in the critical dimension}. Comm. Anal. Geom. \textbf{17} (2009), no. 2, 185-226.

\bibitem{Giaquinta-Book}
\textsc{M. Giaquinta}, \emph{Multiple integrals in the calculus of variations and nonlinear elliptic systems.} Annals of Mathematics Studies, 105. Princeton University Press, Princeton, NJ, 1983.

\bibitem{Goldstein-Strzelecki-Zatorska-2009}
\textsc{P. Goldstein, P. Strzelecki and A. Zatorska-Goldstein}, \emph{On polyharmonic maps into spheres in the critical dimension}. Ann. Inst. H. Poincar\'e Anal. Non Lin\'eaire \textbf{26} (2009),  1387-1405.

%\bibitem{Gong-Lamm-Wang-2008} \textsc{H. Gong, T. Lamm and C.Y. Wang,}
%\emph{Boundary partial regularity for a class of biharmonic maps.}
%Calc. Var. Partial Differ. Equ. \textbf{45}(1--2) (2012), 165-191.

\bibitem{Guo-Xiang-2019-Boundary}
\textsc{C.-Y. Guo and C.-L. Xiang}, \emph{Regularity of solutions for a fourth order linear system via conservation law}.  J. Lond. Math. Soc. (2) \textbf{101} (2020), no. 3, 907-922.

\bibitem{Helein-2002} \textsc{F. H\'elein,}
\emph{Harmonic maps, conservation laws and moving frames.} Cambridge Tracts in Mathematics, \textbf{150}. Cambridge University Press, Cambridge, 2002.


\bibitem{Hildbrandt-1980}
\textsc{S. Hildebrandt,}  \emph{Nonlinear elliptic systems and harmonic mappings. } Proceedings of the 1980 Beijing Symposium on Differential Geometry and Differential Equations, vol 1,2,3 (Beijing, 1980), 481-615, Science Press, Beijing, 1982.

\bibitem{Ho-2014}
\textsc{K.-P. Ho}, \emph{ Sobolev-Jawerth embedding of Triebel-Lizorkin-Morrey-Lorentz spaces and fractional integral operator on Hardy type spaces.} Math. Nachr. \textbf{287} (2014),  1674-1686.

\bibitem{Horter-Lamm-2020}
\textsc{J. H\"orter and T. Lamm}, \emph{Conservation laws for even order elliptic systems in the critical dimensions - a new approach.} Preprint 2020.

\bibitem{Ku-2008} \textsc{Y. Ku,}
\emph{Interior and boundary regularity of intrinsic biharmonic maps to spheres.} Pacific J. Math. \textbf{234} (2008), 43-67.

%\bibitem{Lamm-2005}
%\textsc{T. Lamm}, \emph{Biharmonic map heat flow into manifolds of nonpositive curvature}. Calc. Var. Partial Differential Equations \textbf{22} (2005), 421-445.

\bibitem{Lamm-Riviere-2008} \textsc{T. Lamm and T. Rivi\`ere,} \emph{Conservation
laws for fourth order systems in four dimensions.} Comm. Partial Differential
Equations \textbf{33} (2008), 245-262.

\bibitem{Lamm-Wang-2009}\textsc{T. Lamm and C. Wang,} \emph{Boundary
regularity for polyharmonic maps in the critical dimension.} Adv.
Calc. Var. \textbf{2} (2009), 1-16.

\bibitem{Meyer-Riviere-2003} \textsc{Y. Meyer and T. Rivi\`ere}, \emph{Partial regularity results for a class of stationary Yang-Mills fields in high dimension}, Rev. Mat. Iberoamericana, \textbf{19} (2003), 195-219.

    \bibitem{Morrey-1948}
C.B. Morrey, \textit{The problem of plateau on a Riemannian manifold}, Ann. Math. (2) \textbf{49} (1948), 807-851.

\bibitem{Muller-Schikorra-2009} \textsc{F. M\"uller and A. Schikorra,}
\emph{Boundary regularity via Uhlenbeck-Rivi\`ere decomposition.}
Analysis (Munich) \textbf{29} (2009), 199-220.

\bibitem{ONeil-1963} \textsc{R. O'Neil,} \emph{Convolution operators
and $L(p,\,q)$ spaces.} Duke Math. J. \textbf{30} (1963) 129-142.

\bibitem{Riviere-2007} \textsc{T. Rivi\`ere,} \emph{Conservation
	laws for conformally invariant variational problems.} Invent. Math.
\textbf{168} (2007), 1-22.

\bibitem{Riviere-2011} \textsc{T. Rivi\`ere,}
\emph{The role of integrability by compensation in conformal geometric analysis.}  Analytic aspects of problems in Riemannian geometry: elliptic PDEs, solitons and computer imaging, 93-127, S\'emin. Congr., 22, Soc. Math. France, Paris, 2011.


\bibitem{Riviere-2012} \textsc{T. Rivi\`ere}, \emph{Conformally invariant variational problems}. Lecture notes at ETH Zurich, available at \textit{https://people.math.ethz.ch/~riviere/lecture-notes}, 2012.

\bibitem{Riviere-Struve-2008} \textsc{T. Rivi\`ere and M. Struwe,}
\emph{Partial regularity for harmonic maps and related problems.}
Comm. Pure Appl. Math. \textbf{61} (2008), 451-463.

\bibitem{Struwe-2008} \textsc{M. Struwe,} \emph{Partial regularity
for biharmonic maps, revisited.} Calc. Var. Partial Differential Equations
\textbf{33} (2008), 249-262.

\bibitem{Strzelecki-2003} \textsc{P. Strzelecki,}
\emph{On biharmonic maps and their generalizations}. Calc. Var. Partial Differential Equations \textbf{18} (2003),  401-432.

%\bibitem{Strzelecki-2003} \textsc{P. Strzelecki,} \emph{A new proof
%of regularity of weak solutions of the H-surface equation.} Calc.
%Var. Partial Differential Equations \textbf{16} (2003), 227-242.

\bibitem{Tao-Tian-2004} \textsc{T. Tao and G. Tian},
\emph{A singularity removal theorem for Yang-Mills fields in higher dimensions}. J. Amer. Math. Soc. \textbf{17} (2004),  557-593.

\bibitem{Uhlenbeck-1982}
\textsc{K. Uhlenbeck}, \emph{Connections with $L^p$ bounds on curvature}. Comm. Math. Phys. \textbf{83} (1982),  31-42.

\bibitem{Wang-2004-CV} \textsc{C.Y. Wang,} \emph{Remarks on biharmonic
maps into spheres.} Calc. Var. Partial Differential Equations \textbf{21}
(2004), 221-242.

\bibitem{Wang-2004-MZ} \textsc{C.Y. Wang,} \emph{Biharmonic maps
from R4 into a Riemannian manifold.} Math. Z. \textbf{247} (2004),
65-87.

\bibitem{Wang-2004-CPAM} \textsc{C.Y. Wang,} \emph{Stationary biharmonic
maps from $\R^m$ into a Riemannian manifold.} Comm. Pure Appl. Math. \textbf{57}
(2004), 419-444.

\bibitem{Ziemer} \textsc{W.P. Ziemer}, \emph{Weakly differentiable
functions.} Graduate Texts in Mathematics, 120. Springer-Verlag, New
York, 1989.

%\bibitem{Wang-Zheng-2012-JFA}
%\textsc{C.Y. Wang and S.Z. Zheng}, \emph{Energy identity of approximate biharmonic maps to Riemannian manifolds and its application}. J. Funct. Anal. 263 (2012), no. 4, 960-987.
%\bibitem{Ziemer} \textsc{W.P. Ziemer}, \emph{Weakly differentiable
%functions.} Graduate Texts in Mathematics, 120. Springer-Verlag, New
%York, 1989.
\end{thebibliography}
\end{document}